\renewcommand*\vec\bm
\renewcommand\arraystretch{1.25}
\newcommand*\centerfloat{%
  \parindent \z@
  \leftskip \z@ \@plus 1fil \@minus \textwidth
  \rightskip\leftskip
  \parfillskip \z@skip}
\pgfplotsset{
  compat = newest,
  filter discard warning = false,
  legend cell align = left,
  every axis plot/.append style = {
    black,
    mark = none,
    line width = 1pt
  },
  cycle list name = customlines,
  /pgf/number format/1000 sep = {},
  ticklabel style = {font = \footnotesize},
  label style = {font = \footnotesize},
  every axis legend/.append style = {
    font = \footnotesize
  }
}
\DeclareMathOperator{\diag}{diag} 
\DeclareMathOperator{\rank}{rank} 
\newcommand*\minimat[4]{\left[%
  \begin{smallmatrix} #1 & #2 \\ #3 & #4 \end{smallmatrix}%
    \right]}
\newif\if@borderstar
\def\bordermatrix{\@ifnextchar*{%
\@borderstartrue\@bordermatrix@i}{\@borderstarfalse\@bordermatrix@i*}%
}
\def\@bordermatrix@i*{\@ifnextchar[{\@bordermatrix@ii}{\@bordermatrix@ii[()]}}
\def\@bordermatrix@ii[#1]#2{%
\begingroup
\m@th\@tempdima8.75\p@\setbox\z@\vbox{%
\def\cr{\crcr\noalign{\kern 2\p@\global\let\cr\endline }}%
\ialign {$##$\hfil\kern 2\p@\kern\@tempdima & \thinspace %
\hfil $##$\hfil && \quad\hfil $##$\hfil\crcr\omit\strut %
\hfil\crcr\noalign{\kern -\baselineskip}#2\crcr\omit %
\strut\cr}}%
\setbox\tw@\vbox{\unvcopy\z@\global\setbox\@ne\lastbox}%
\setbox\tw@\hbox{\unhbox\@ne\unskip\global\setbox\@ne\lastbox}%
\setbox\tw@\hbox{%
$\kern\wd\@ne\kern -\@tempdima\left\@firstoftwo#1%
\if@borderstar\kern2pt\else\kern -\wd\@ne\fi%
\global\setbox\@ne\vbox{\box\@ne\if@borderstar\else\kern 2\p@\fi}%
\vcenter{\if@borderstar\else\kern -\ht\@ne\fi%
\unvbox\z@\kern-\if@borderstar2\fi\baselineskip}%
\if@borderstar\kern-2\@tempdima\kern2\p@\else\,\fi\right\@secondoftwo#1 $%
}\null \;\vbox{\kern\ht\@ne\box\tw@}%
\endgroup
}
\title{Cross product--free matrix pencils for computing generalized
singular values\thanks{This work was supported in part by the Deutsche
Forschungsgemeinschaft through the collaborative research centre
SFB-TRR55. Version: \today.}}
\author{Ian N. Zwaan\thanks{Faculty of Mathematics and Natural Sciences,
Bergische Universit\"at Wuppertal, \texttt{ianzwaan.com}.}}
\date{}
\shorttitle{Cross product--free matrix pencils}
\begin{document}

\maketitle


\begin{abstract}
  It is well known that the generalized (or quotient) singular values of
  a matrix pair $(A, C)$ can be obtained from the generalized
  eigenvalues of a matrix pencil consisting of two augmented matrices.
  The downside of this reformulation is that one of the augmented
  matrices requires a cross products of the form $C^*C$, which may
  affect the accuracy of the computed quotient singular values if $C$
  has a large condition number. A similar statement holds for the
  restricted singular values of a matrix triplet $(A, B, C)$ and the
  additional cross product $BB^*$. This article shows that we can
  reformulate the quotient and restricted singular value problems as
  generalized eigenvalue problems without having to use any
  cross product or any other matrix-matrix product. Numerical
  experiments show that there indeed exist situations in which the new
  reformulation leads to more accurate results than the well-known
  reformulation.
\end{abstract}



\begin{keywords}
  generalized eigenvalue problem, augmented matrix,
  generalized singular value decomposition, GSVD, quotient singular
  value decomposition, QSVD, restricted singular value decomposition,
  RSVD.
\end{keywords}



\begin{AMS}
  65F15; 
  15A18  
\end{AMS}



\section{Introduction}\label{sec:int}

Suppose that $A \in \mathbb{C}^{p\times q}$, then it is well known that
the (ordinary) singular values of $A$ can be obtained from the
eigenvalues of either of the products 
\begin{equation}\label{eq:sqSVD}
  \mathcal A = A^*\!A
  \quad\text{or}\quad
  \mathcal A = AA^*,
\end{equation}
or from the augmented matrix
\begin{equation}\label{eq:augSVD}
  \mathcal A =
  \begin{bmatrix}
    0 & A \\
    A^* & 0
  \end{bmatrix}.
\end{equation}
Likewise, if $C \in \mathbb{C}^{n \times q}$ is a second matrix, then
the generalized singular values (precise definitions of the various
singular value concepts follow later), also called the quotient singular
values \cite{MZ91}, of the matrix pair $(A,C)$ can be obtained either
from the generalized eigenvalues of the pencil
\begin{equation}\label{eq:sqQSVD}
  \mathcal A - \lambda \mathcal B = A^*\!A - \lambda C^*C
\end{equation}
or from the augmented pencil
\begin{equation}\label{eq:augQSVD}
  \mathcal A - \lambda \mathcal B =
  \begin{bmatrix}
    0 & A \\
    A^* & 0
  \end{bmatrix}
  {} - \lambda
  \begin{bmatrix}
    I & 0 \\
    0 & C^*C
  \end{bmatrix}.
\end{equation}
Furthermore, if $B \in \mathbb{C}^{p \times m}$ is a third matrix, then
the restricted singular values of the triplet $(A,B,C)$, can be obtained
from the pencil
\begin{equation}\label{eq:augRSVD}
  \mathcal A - \lambda \mathcal B =
  \begin{bmatrix}
    0 & A \\
    A^* & 0
  \end{bmatrix}
  {} - \lambda
  \begin{bmatrix}
    BB^* & 0 \\
    0 & C^*C
  \end{bmatrix}.
\end{equation}
Except for \eqref{eq:augSVD}, all the above (generalized) eigenvalue
problems require cross products of the form $A^*\!A$, $BB^*$, or $C^*C$.
Textbooks by, for example, Stewart~\cite[Sec.~3.3.2]{Stew01},
Higham~\cite[Sec.~20.4]{High02}, and Golub and
Van~Loan~\cite[Sec.~8.6.3]{GvL13} dictate that these types of products
are undesirable for poorly conditioned matrices, because condition
numbers are squared and accuracy may be lost. This loss of accuracy has
also been investigated, for example, by Jia~\cite{Jia06} for the SVD,
and by and Huang and Jia~\cite{HJ19} for the GSVD.

Numerical methods exist, for large and sparse matrices in particular,
purposefully designed to avoid explicit use of the unwanted cross
products. Examples for the singular value problem include
Golub--Kahan--Lanczos bidiagonalization; see, e.g.,
Demmel~\cite[Sec.~6.3.3]{ET00Bidiag}; and JDSVD by
Hochstenbach~\cite{Hoch01}.  Examples for the quotient singular value
problem include a bidiagonalization method by Zha~\cite{Zha96}, JDGSVD
by Hochstenbach~\cite{Hoch09}, generalized Krylov methods by
Hochstenbach, Reichel, and Yu~\cite{HLY15} and Reichel and
Yu~\cite{RY15matdec,RY15flexarn}, and a Generalized--Davidson based
projection method \cite{ZH17}.

The purpose of this article is to show that we can reformulate the
ordinary, quotient, and restricted singular value problems as a matrix
pencil that consists of two augmented matrices, neither of which require
any cross product, or any other matrix-matrix product.

To see precisely how the eigenvalues and eigenvectors of the matrix
pencils relate to the singular values and vectors, we can use the
Kronecker Canonical Form (KCF). The KCF, detailed in
Section~\ref{sec:kcf}, is the generalization of the Jordan form of a
matrix to matrix pencils, and fully characterises the generalized
eigenstructure of pencils. The next step is to reformulate the ordinary
singular value decomposition (OSVD) in Section~\ref{sec:svd}, and to
analyze the corresponding KCF.  This particular reformulation is purely
for exposition, since the augmented matrix \eqref{eq:augSVD} is already
free of cross products.  That is, the new reformulation of the OSVD is
the simplest case we can consider and the easiest to verify by hand, but
already uses the same general approach we use for the other two singular
value problems. In the next two sections, Section~\ref{sec:qsvd} and
\ref{sec:rsvd}, we discuss the reformulation of the quotient singular
value decomposition (QSVD) and the restricted singular value
decomposition (RSVD). The former is better known and more widely used in
practice, while the latter is more general. The generality of the RSVD
makes it tedious to describe in full detail, and also tedious to get the
KCF of its corresponding cross product--free pencil. Still, its
treatment is essentially identical to the simpler cases of the OSVD and
QSVD.  The numerical experiments that follow in Section~\ref{sec:ne}
show that, for some matrix pairs and triplets, we can compute the
singular values more accurately from the new cross product--free pencils
than from the typical augmented pencils. The new pencils are not without
their own downsides, such as an increased problem size and the presence
of Jordan blocks, which we further discuss in the conclusion in
Section~\ref{sec:con}.

Throughout this text, $I$ denotes an identity matrix of appropriate
size; $M^T$ and $M^*$ denote the transpose and Hermitian transpose,
respectively, of a matrix $M$; and $\otimes$ denotes the Kronecker
product. Furthermore, for some permutation $\pi$ of length $k$, the
corresponding permutation matrix is given by
\begin{equation*}
  \Pi =
  \begin{bmatrix}
    e_{\pi(1)} & e_{\pi(2)} & \dots & e_{\pi(k)}
  \end{bmatrix},
\end{equation*}
where the $e_j$ are the $j$th canonical basis vectors of length $k$.
The notation of permutations and permutation matrices is extended to
block matrices, and permute entire blocks of rows or columns at once.



\section{The Kronecker canonical form}\label{sec:kcf}

As mentioned before, the KCF is a generalization of the Jordan form to
matrix pencils, and its importance is that it fully describes the
generalized eigenvalues and generalized eigenspaces of a matrix pencil.
This also means that two matrix pencils are equivalent if-and-only-if
they have the same KCF \cite[Thm.~5]{Gant2}, where the meaning of
equivalence is as in the following definition.


\begin{definition}[Gantmacher~{\cite[Def.~1]{Gant2}}]
  Two pencils of rectangular matrices $\mathcal A - \lambda \mathcal B$
  and $\mathcal A_1 - \lambda \mathcal B_1$ of the same dimensions $k
  \times \ell$ are called strictly equivalent if there exists
  nonsingular $\mathcal X$ and $\mathcal Y$, independent of $\lambda$,
  such that $\mathcal Y^* (\mathcal A - \lambda \mathcal B) \mathcal X =
  \mathcal A_1 - \lambda \mathcal B_1$.
\end{definition}


Now we are ready for the definition of the KCF, which is given by the
following theorem.


\begin{theorem}[Adapted from K\r{a}gstr\"om~{\cite[Sec.~8.7.2]{ET00KCF}}
  and Gantmacher~{\cite[Ch.~2]{Gant2}}]
  Let $\mathcal A, \mathcal B \in \mathbb{C}^{k \times \ell}$, then
  there exists nonsingular matrices $\mathcal X \in \mathbb{C}^{\ell
  \times \ell}$ and $\mathcal Y \in \mathbb{C}^{k \times k}$ such that
  $\mathcal Y^* (\mathcal A - \lambda \mathcal B) \mathcal X$ equals
  \begin{equation*}
    \operatorname{diag}(
      0_{\beta_0 \times \alpha_0}, 
      L_{\alpha_1}, \dots, L_{\alpha_m},
      L_{\beta_1}^T, \dots, L_{\beta_n}^T,
      N_{\gamma_1}, \dots, N_{\gamma_p},
      J_{\delta_1}(\zeta_1), \dots, J_{\delta_q}(\zeta_q)
    );
  \end{equation*}
  where the
  \begin{equation*}
    L_{\alpha_j} =
    \begin{bmatrix}
      0 & 1 \\
      & \ddots & \ddots \\
      && 0 & 1
    \end{bmatrix}
    {} - \lambda
    \begin{bmatrix}
      1 & 0 \\
      & \ddots & \ddots \\
      && 1 & 0
    \end{bmatrix}
  \end{equation*}
  are $\alpha_j \times (\alpha_j + 1)$ singular blocks of right (or
  column) minimal index $\alpha_j$, the
  \begin{equation*}
    L_{\beta_j}^T =
    \begin{bmatrix}
      0 \\
      1 & \ddots \\
      & \ddots & 0 \\
      && 1
    \end{bmatrix}
    {} - \lambda
    \begin{bmatrix}
      1 \\
      0 & \ddots \\
      & \ddots & 1 \\
      && 0
    \end{bmatrix}
  \end{equation*}
  are $(\beta_j + 1) \times \beta_j$ singular blocks of left (or row)
  minimal index $\beta_j$, the
  \begin{equation*}
    N_{\gamma_j} =
    \begin{bmatrix}
      1 & 0 \\
      & \ddots & \ddots \\
      && \ddots & 0 \\
      &&& 1
    \end{bmatrix}
    {} - \lambda
    \begin{bmatrix}
      0 & 1 \\
      & \ddots & \ddots \\
      && \ddots & 1 \\
      &&& 0
    \end{bmatrix}
  \end{equation*}
  are $\gamma_j \times \gamma_j$ Jordan blocks corresponding to an
  infinite eigenvalues $\gamma_j$, and the
  \begin{equation*}
    J_{\delta_j}(\zeta_j) =
    \begin{bmatrix}
      \zeta & 1 \\
      & \ddots & \ddots \\
      && \ddots & 1 \\
      &&& \zeta
    \end{bmatrix}
    {} - \lambda
    \begin{bmatrix}
      1 & 0 \\
      & \ddots & \ddots \\
      && \ddots & 0 \\
      &&& 1
    \end{bmatrix}
  \end{equation*}
  are $\delta_j \times \delta_j$ Jordan blocks corresponding to finite
  eigenvalues $\zeta_j \in \mathbb{C}$.
\end{theorem}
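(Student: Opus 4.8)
The plan is to follow the classical argument of Kronecker and Weierstrass as presented by Gantmacher, reducing a general pencil in three stages: first peel off the right (column) singular structure, then the left (row) singular structure, and finally bring the remaining regular pencil to Weierstrass form. Throughout, ``singular'' means that the columns (or rows) of $\mathcal{A} - \lambda\mathcal{B}$ are linearly dependent over the rational function field $\mathbb{C}(\lambda)$, and ``regular'' means $k = \ell$ with $\det(\mathcal{A} - \lambda\mathcal{B}) \not\equiv 0$.

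\textbf{Right minimal indices.} If the columns are dependent over $\mathbb{C}(\lambda)$, there is a nonzero polynomial vector $x(\lambda) = \sum_{i=0}^{\alpha_1} \lambda^i x_i$ with $(\mathcal{A} - \lambda\mathcal{B})x(\lambda) = 0$; choose one of least degree $\alpha_1$. Expanding the identity gives $\mathcal{A}x_0 = 0$, $\mathcal{A}x_{i+1} = \mathcal{B}x_i$ for $0 \le i < \alpha_1$, and $\mathcal{B}x_{\alpha_1} = 0$. I would first show that $x_0, \dots, x_{\alpha_1}$ and the images $\mathcal{B}x_0, \dots, \mathcal{B}x_{\alpha_1 - 1}$ are each linearly independent sets --- this is exactly where the minimality of $\alpha_1$ enters --- and then extend them to bases to form the leading column blocks of $\mathcal{X}$ and row blocks of $\mathcal{Y}$. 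In these coordinates $\mathcal{Y}^*(\mathcal{A} - \lambda\mathcal{B})\mathcal{X}$ becomes block upper triangular with leading block exactly $L_{\alpha_1}$ and a trailing pencil $\mathcal{A}_1 - \lambda\mathcal{B}_1$ of smaller size; a further change of basis annihilates the coupling block. Iterating on $\mathcal{A}_1 - \lambda\mathcal{B}_1$ splits off $L_{\alpha_2}, \dots, L_{\alpha_m}$ and exhausts the column defect.

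\textbf{Left minimal indices and the regular part.} Applying the previous step to the transposed pencil $\mathcal{A}^T - \lambda\mathcal{B}^T$ splits off the blocks $L_{\beta_1}^T, \dots, L_{\beta_n}^T$ (a right minimal index of the transpose is a left minimal index of the original), together with any simultaneously zero rows and columns, which I collect into the degenerate block $0_{\beta_0 \times \alpha_0}$. What remains is a regular square pencil $\mathcal{A}_r - \lambda\mathcal{B}_r$. Homogenizing to $\mu\mathcal{A}_r - \lambda\mathcal{B}_r$ and computing its Smith normal form over $\mathbb{C}[\lambda,\mu]$, the invariant factors factor into elementary divisors $(\lambda - \zeta_j\mu)^{\delta_j}$ and $\mu^{\gamma_j}$; a companion-type construction realizes each $(\lambda - \zeta_j\mu)^{\delta_j}$ as a Jordan block $J_{\delta_j}(\zeta_j)$ and each $\mu^{\gamma_j}$ as an infinite-eigenvalue block $N_{\gamma_j}$, and the direct sum of all of these is strictly equivalent to $\mathcal{A}_r - \lambda\mathcal{B}_r$. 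This last part is the Weierstrass theorem, provable by the same local reduction used for the ordinary Jordan form.

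\textbf{Uniqueness and the main obstacle.} The multiset of blocks is unique because the minimal indices and elementary divisors are strict-equivalence invariants: the $\alpha_j$ and $\beta_j$ are determined by the rank of $\mathcal{A} - \lambda\mathcal{B}$ over $\mathbb{C}(\lambda)$ together with the dimensions of the spaces of polynomial null vectors of each degree, while the elementary divisors are read off the (invariant) Smith form. The genuinely delicate point is in the first stage: one must verify that exposing a single $L_{\alpha_1}$ leaves a trailing pencil with no right minimal index smaller than $\alpha_1$, so that the blocks emerge in nondecreasing order and the reduction terminates. Everything else is either linear algebra over the field $\mathbb{C}(\lambda)$ or the standard Smith--Weierstrass machinery; the real work is the combinatorial control of the minimal indices, for which I would follow Gantmacher's bookkeeping closely.
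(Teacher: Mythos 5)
The paper does not prove this theorem at all --- it is stated as an adapted quotation of K\r{a}gstr\"om and Gantmacher --- so there is no internal proof to compare against, and your sketch is precisely the classical Kronecker reduction from the source the paper cites. Your outline is correct and complete in structure: minimal-degree polynomial null vectors $x(\lambda)$ with $\mathcal A x_0 = 0$, $\mathcal A x_{i+1} = \mathcal B x_i$, $\mathcal B x_{\alpha_1} = 0$ split off the $L_{\alpha_j}$ blocks, the transposed pencil supplies the $L_{\beta_j}^T$ blocks and the $0_{\beta_0\times\alpha_0}$ degeneracy, the Smith--Weierstrass machinery handles the regular part, and you correctly flag (rather than gloss over) the two delicate points --- the linear independence arguments plus the solvability of the coupling-block elimination, and the fact that the trailing pencil has no right minimal index smaller than $\alpha_1$ --- which is exactly where Gantmacher's bookkeeping is needed.
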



The meaning of minimal indices is not important for this paper, and an
interested reader may refer to the references for more information.
Furthermore, the zero block $0_{\beta_0 \times \alpha_0}$ corresponds to
$\alpha_0$ blocks $L_0$ and $\beta_0$ blocks $L_0^T$ by convention. We
include it here explicitly for clarity, and because in this paper we
have no KCF with any other $L_\alpha$ and $L_\beta^T$ blocks.
Moreover, in this paper all Jordan blocks corresponding to finite
nonzero eigenvalues $\zeta_j$ will be such that $J_{\delta_j}(\zeta_j) =
\zeta_j - \lambda$; that is, $\delta_j = 1$ whenever $\zeta_j \neq 0$.




\section{The ordinary singular value decomposition}\label{sec:svd}

The augmented matrix \eqref{eq:augSVD} does not contain any cross
products. Still, we can take the cross product--free matrix pencil for
the QSVD and consider the OSVD as a special case. The main reason for us
to do so, is to show how we can determine the corresponding KCF for the
simplest case that we can consider. That is, this section sets the stage
and the reformulations for the QSVD and RSVD and their proofs in the
sections to come, follow from the same general idea.

Let us start by recalling the definition of the ordinary singular value
decomposition with the following theorem (see, e.g., Golub and
Van~Loan~\cite{GvL13}).


\begin{theorem}[Ordinary singular value decomposition (OSVD)]\label{thm:svd}
  Let $A \in \mathbb{C}^{p \times q}$; then there exist unitary matrices
  $U \in \mathbb{C}^{p \times p}$ and $V \in \mathbb{C}^{q \times q}$
  such that
  \begin{equation*}
    \Sigma = U^*\!AV =\;
    \bordermatrix[{[]}]{%
        & q_1      & q_2 \cr
    p_1 & D_\sigma & 0   \cr
    p_2 & 0        & 0
    },
  \end{equation*}
  where $p = p_1 + p_2$, $q = q_1 + q_2$, and $p_1 = q_1$. Furthermore,
  $D_\sigma = \diag(\sigma_1, \dots, \sigma_{p_1})$ with $\sigma_j > 0$
  for all $j = 1$, $2$, \dots, $p_1$.
\end{theorem}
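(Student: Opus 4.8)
The plan is to prove Theorem~\ref{thm:svd} by the classical existence argument for the singular value decomposition, working by induction on $\min\{p,q\}$; the exact block partitioning displayed in the statement will then be obtained at the very end by one simultaneous permutation of rows and columns, which is unitary and so can be absorbed into $U$ and $V$. The base case ($A$ empty or $A = 0$) is immediate, with $p_1 = q_1 = 0$. For the inductive step I would use a compactness argument: the unit sphere in $\mathbb{C}^q$ is compact and $v \mapsto \|Av\|_2$ is continuous, so the maximum $\sigma_1 := \max_{\|v\|_2 = 1}\|Av\|_2$, which equals the operator $2$-norm $\|A\|_2$, is attained at some unit vector $v_1$. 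If $\sigma_1 = 0$ then $A = 0$ and we are done; otherwise set $u_1 := Av_1/\sigma_1$, extend $v_1$ and $u_1$ to unitary matrices $V_1 = \begin{bmatrix} v_1 & \widetilde V\end{bmatrix}$ and $U_1 = \begin{bmatrix} u_1 & \widetilde U\end{bmatrix}$, and compute
\begin{equation*}
  U_1^* A V_1 = \begin{bmatrix} \sigma_1 & w^* \\ 0 & A_2 \end{bmatrix}, \qquad A_2 \in \mathbb{C}^{(p-1)\times(q-1)},
\end{equation*}
where the zero block appears because $\widetilde U^* A v_1 = \sigma_1\widetilde U^* u_1 = 0$.

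The key step --- and essentially the only non-routine point --- is to show that the $(1,2)$ block $w^*$ vanishes, so that the problem deflates to the strictly smaller matrix $A_2$. I would argue by contradiction: if $w \neq 0$, then for $z := \begin{bmatrix} \sigma_1 \\ w\end{bmatrix}$ one has $\|U_1^*AV_1 z\|_2 \geq \sigma_1^2 + \|w\|_2^2 > \sigma_1\|z\|_2$, so the spectral norm of $U_1^*AV_1$ strictly exceeds $\sigma_1 = \|A\|_2$, contradicting the unitary invariance of the spectral norm. Hence $w = 0$ and $U_1^*AV_1 = \diag(\sigma_1, A_2)$.

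What remains is bookkeeping: apply the induction hypothesis to $A_2$ to get unitary $U_2, V_2$ with $U_2^*A_2V_2$ diagonal with nonnegative entries, set $U := U_1\diag(1,U_2)$ and $V := V_1\diag(1,V_2)$ so that $U^*AV$ is diagonal with nonnegative real entries, and then permute rows and columns simultaneously to collect the strictly positive entries into a leading block of order $p_1 = q_1 = \rank(A)$. This produces precisely the stated $\Sigma$, with $D_\sigma = \diag(\sigma_1,\dots,\sigma_{p_1})$ and $\sigma_j > 0$ for all $j$. A shorter alternative would be to apply the spectral theorem to the Hermitian positive semidefinite matrix $A^*A$, take $V$ from its eigenvectors and $U$ from the normalized columns of $AV$; this is quicker but invokes exactly the kind of cross product that the remainder of the paper avoids in computation --- harmless here, since it is used only inside an existence proof.
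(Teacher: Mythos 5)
Your proof is correct. Note, however, that the paper does not prove Theorem~\ref{thm:svd} at all: it is recalled as a classical result with a citation to Golub and Van~Loan, so there is no internal argument to compare against. Your induction on $\min\{p,q\}$ — attaining $\sigma_1 = \|A\|_2$ by compactness, deflating after showing by contradiction (via the vector $z = [\sigma_1;\, w]$ and unitary invariance of the spectral norm) that the off-diagonal block $w$ vanishes, and finishing with a simultaneous permutation to collect the positive entries into the leading block of order $p_1 = q_1 = \rank A$ — is exactly the standard textbook existence proof and every step checks out; the alternative via the spectral theorem for $A^*A$ is equally valid here, since, as you observe, the cross product is only objectionable computationally, not in an existence argument.
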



Next, let us consider the eigenvalue decomposition of a $4 \times 4$
pencil that we can consider as the cross product--free pencil for the $1
\times 1$ matrix $\sigma \ge 0$.


\begin{lemma}\label{thm:svdfour}
  Suppose $\sigma$ is a positive real number, and consider the pencil
  \begin{equation}\label{eq:svdfour}
    \mathcal A - \lambda \mathcal B =
    \begin{bmatrix}
      0 & \sigma & 0 & 0 \\
      \sigma & 0 & 0 & 0 \\
      0 & 0 & 1 & 0 \\
      0 & 0 & 0 & 1
    \end{bmatrix}
    {} - \lambda
    \begin{bmatrix}
      0 & 0 & 1 & 0 \\
      0 & 0 & 0 & 1 \\
      1 & 0 & 0 & 0 \\
      0 & 1 & 0 & 0
    \end{bmatrix}.
  \end{equation}
  Then the unitary matrices
  \begin{equation*}
    \mathcal X = \frac{1}{2}
    \begin{bmatrix}
      1 & -1 & -i & \phantom{-}i \\
      1 & -1 & \phantom{-}i & -i \\
      1 & \phantom{-}1 & \phantom{-}1 & \phantom{-}1 \\
      1 & \phantom{-}1 & -1 & -1
    \end{bmatrix}
    \quad\text{and}\quad
    \mathcal Y = \frac{1}{2}
    \begin{bmatrix}
      1 & \phantom{-}1 & \phantom{-}1 & \phantom{-}1 \\
      1 & \phantom{-}1 & -1 & -1 \\
      1 & -1 & -i & \phantom{-}i \\
      1 & -1 & \phantom{-}i & -i
    \end{bmatrix}
  \end{equation*}
  are such that
  \begin{equation*}
    \mathcal Y^* (\mathcal A - \lambda \mathcal B) \mathcal X =
    \operatorname{diag}(\sqrt\sigma, -\sqrt\sigma, i\sqrt\sigma, -i\sqrt\sigma)
    {} - \lambda I.
  \end{equation*}
\end{lemma}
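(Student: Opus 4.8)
The plan is to verify the stated equality by a direct computation, organized so that the only real content is an eigendecomposition of a single $4\times 4$ matrix.

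First I would observe that $\mathcal B$ is a permutation matrix — it exchanges the first two coordinates with the last two — so $\mathcal B$ is unitary with $\mathcal B^{*}=\mathcal B^{-1}=\mathcal B$. A glance at the displayed $\mathcal X$ and $\mathcal Y$ reveals that $\mathcal Y=\mathcal B\mathcal X$, i.e.\ $\mathcal Y$ is obtained from $\mathcal X$ by interchanging its top two rows with its bottom two. Writing $D=\operatorname{diag}(\sqrt\sigma,-\sqrt\sigma,i\sqrt\sigma,-i\sqrt\sigma)$, the target identity $\mathcal Y^{*}(\mathcal A-\lambda\mathcal B)\mathcal X=D-\lambda I$ then splits into $\mathcal Y^{*}\mathcal B\mathcal X=I$ and $\mathcal Y^{*}\mathcal A\mathcal X=D$, and I would establish these separately.

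For $\mathcal Y^{*}\mathcal B\mathcal X=I$, substituting $\mathcal Y=\mathcal B\mathcal X$ and using $\mathcal B^{*}\mathcal B=I$ reduces the claim to $\mathcal X^{*}\mathcal X=I$, i.e.\ to the orthonormality of the four columns of $\mathcal X$, which are $\tfrac12$ times vectors with entries in $\{\pm1,\pm i\}$; this is immediate, and it simultaneously shows that $\mathcal X$, hence $\mathcal Y=\mathcal B\mathcal X$, is unitary. For $\mathcal Y^{*}\mathcal A\mathcal X=D$, the same substitution together with $\mathcal B^{*}=\mathcal B^{-1}$ gives $\mathcal Y^{*}\mathcal A\mathcal X=\mathcal X^{*}(\mathcal B^{-1}\mathcal A)\mathcal X$, so it suffices to prove the eigendecomposition $(\mathcal B^{-1}\mathcal A)\mathcal X=\mathcal X D$ — that is, that the $j$th column of $\mathcal X$ is an eigenvector of $\mathcal B^{-1}\mathcal A$ belonging to the $j$th diagonal entry of $D$. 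Here the block structure does the work: $\mathcal B^{-1}\mathcal A$ is tiny, and $(\mathcal B^{-1}\mathcal A)^{2}$ equals $\sigma$ times a permutation matrix, which is precisely why $\det(\mathcal A-\lambda\mathcal B)=\lambda^{4}-\sigma^{2}$ and the generalized eigenvalues are $\pm\sqrt\sigma$ and $\pm i\sqrt\sigma$. The eigenvector claim then reduces to four short matrix--vector products, one per column of $\mathcal X$.

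I do not expect a genuine obstacle here. The parts that need a little care rather than pure bookkeeping are the complex arithmetic in those four products and, above all, getting the \emph{order} right, so that column $j$ of $\mathcal X$ pairs with the $j$th entry of $D$ and $\mathcal Y^{*}\mathcal A\mathcal X$ comes out as $D$ exactly, not merely as a diagonal matrix carrying the same spectrum in a different order. Computing $\det(\mathcal A-\lambda\mathcal B)=\lambda^{4}-\sigma^{2}$ at the outset is a convenient way to fix the spectrum before doing the eigenvector work.
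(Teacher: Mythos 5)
Your reduction is organized sensibly, and it is the same ``direct verification'' the paper appeals to: since $\mathcal B$ is the block swap, $\mathcal Y=\mathcal B\mathcal X$, and $\mathcal X$ has orthonormal columns, the claim splits into $\mathcal Y^*\mathcal B\mathcal X=I$ and $\mathcal X^*(\mathcal B^{-1}\mathcal A)\mathcal X=D$. But the one step you postponed --- the four matrix--vector products --- is precisely the step that fails, and the failure is not a matter of ordering. Take the first column $x_1=\tfrac12(1,1,1,1)^T$: with $M=\mathcal B^{-1}\mathcal A=\mathcal B\mathcal A$ one gets $Mx_1=\tfrac12(1,1,\sigma,\sigma)^T$, which is a multiple of $x_1$ only when $\sigma=1$; equivalently, $(\mathcal Y^*\mathcal A\mathcal X)_{11}=(\sigma+1)/2\neq\sqrt\sigma$ for $\sigma\neq1$. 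Your own framework already contains a structural warning you did not act on: $\mathcal X^*M\mathcal X=D$ with $\mathcal X$ unitary would make $M$ unitarily diagonalizable, hence normal, yet $MM^*=\diag(1,1,\sigma^2,\sigma^2)$ while $M^*M=\diag(\sigma^2,\sigma^2,1,1)$, so no unitary pair $(\mathcal X,\mathcal Y)$ with $\mathcal Y^*\mathcal B\mathcal X=I$ can diagonalize the pencil unless $\sigma=1$. Your determinant and $M^2$ computations are correct, but they only pin down the spectrum $\{\pm\sqrt\sigma,\pm i\sqrt\sigma\}$; they do not certify that the printed columns are eigenvectors, and for $\sigma\neq1$ they are not. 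So the proposal, carried out honestly, does not prove the statement --- it refutes the statement as literally printed (the word ``unitary'' and the displayed $\mathcal X,\mathcal Y$ are at fault), which is something your write-up should have surfaced rather than asserted away.

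The repair is small and is what the surrounding results actually rely on: replace $\mathcal X$ and $\mathcal Y$ by $\sigma^{-1/4}\diag(1,1,\sqrt\sigma,\sqrt\sigma)\,\mathcal X$ and $\sigma^{-1/4}\diag(1,1,\sqrt\sigma,\sqrt\sigma)\,\mathcal Y$, which are nonsingular but no longer unitary. Writing $S=\diag(1,1,\sqrt\sigma,\sqrt\sigma)$, one has $S\mathcal B S=\sqrt\sigma\,\mathcal B$ and $S\mathcal A S=\sigma(\mathcal A|_{\sigma=1})$, so the scaled transformation sends the $\lambda$-part to $I$ and the constant part to $\sqrt\sigma\,\mathcal X^*\mathcal B(\mathcal A|_{\sigma=1})\mathcal X=\sqrt\sigma\,\diag(1,-1,i,-i)=D$, where the last equality is your four products in the only case in which they do succeed, $\sigma=1$ (there $M$ is a $4$-cycle permutation and the columns of $\mathcal X$ are its eigenvectors in the stated order). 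The columns of the corrected $\mathcal X$ are proportional to $(1,1,\sqrt\sigma,\sqrt\sigma)^T$, $(-1,-1,\sqrt\sigma,\sqrt\sigma)^T$, $(-i,i,\sqrt\sigma,-\sqrt\sigma)^T$, $(i,-i,\sqrt\sigma,-\sqrt\sigma)^T$, in agreement with the eigenvectors listed in Corollary~\ref{thm:svdvecs} and with the $\sqrt\sigma$-scalings that appear in Lemmas~\ref{thm:qsvdfour} and~\ref{thm:rsvdfour}; the conclusion needed later (equivalence of \eqref{eq:svdfour} to $D-\lambda I$, hence the $J_1(\pm\sqrt{\pm\sigma})$ blocks in the KCF) survives intact.
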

\begin{proof}
  The proof is by direct verification.
\end{proof}


With the above definition and lemma, we can state and prove the
following theorem and corollary.


\begin{theorem}\label{thm:sqfreeSVD}
  Let $A$ and its corresponding SVD be as in Theorem~\ref{thm:svd}. Then
  the KCF of the pencil
  \begin{equation}\label{eq:sqfreeSVD}
    \mathcal A - \lambda \mathcal B =
    \begin{bmatrix}
      0   & A & 0 & 0 \\
      A^* & 0 & 0 & 0 \\
      0   & 0 & I & 0 \\
      0   & 0 & 0 & I
    \end{bmatrix}
    {} - \lambda
    \begin{bmatrix}
      0   & 0 & I & 0 \\
      0   & 0 & 0 & I \\
      I   & 0 & 0 & 0 \\
      0   & I & 0 & 0
    \end{bmatrix}
  \end{equation}
  consists of the following blocks.
  \begin{enumerate}
    \item A series of $p_2 + q_2$ blocks $J_2(0)$.
    \item The blocks
      $J_1(\sqrt{\sigma_1})$, \dots, $J_1(\sqrt{\sigma_{p_1}})$, 
      $J_1(-\sqrt{\sigma_1})$, \dots, $J_1(-\sqrt{\sigma_{p_1}})$,
      $J_1(i\sqrt{\sigma_1})$, \dots, $J_1(i\sqrt{\sigma_{p_1}})$,
      $J_1(-i\sqrt{\sigma_1})$, \dots, $J_1(-i\sqrt{\sigma_{p_1}})$,
      where $\sigma_1$, \dots, $\sigma_{p_1}$ are the nonzero singular
      values of $A$.
  \end{enumerate}
\end{theorem}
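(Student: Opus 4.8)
The plan is to reduce the pencil~\eqref{eq:sqfreeSVD}, by two strict equivalences, to a block diagonal pencil whose diagonal blocks are either copies of the $4\times4$ pencil~\eqref{eq:svdfour} or copies of a small $2\times2$ pencil, and then to read off the KCF block by block; this is legitimate because strictly equivalent pencils have the same KCF~\cite[Thm.~5]{Gant2}. First I would substitute the OSVD $A = U\Sigma V^*$ of Theorem~\ref{thm:svd} and apply the strict equivalence $\mathcal Y^*(\mathcal A - \lambda\mathcal B)\mathcal X$ with the block diagonal unitary matrices $\mathcal X = \mathcal Y = \operatorname{diag}(U, V, U, V)$, whose four diagonal blocks have sizes $p$, $q$, $p$, $q$ and therefore match the block partition of~\eqref{eq:sqfreeSVD}. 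Since $U$ and $V$ are unitary, $\mathcal Y^*\mathcal B\mathcal X = \mathcal B$ and the two identity blocks of $\mathcal A$ are unchanged, while the off-diagonal blocks become $U^*AV = \Sigma$ and $V^*A^*U = \Sigma^*$; hence $\mathcal Y^*(\mathcal A - \lambda\mathcal B)\mathcal X$ has the same shape as~\eqref{eq:sqfreeSVD} with $A$ replaced by $\Sigma$.

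Next I would split each of the four blocks according to $p = p_1 + p_2$ and $q = q_1 + q_2$ and apply a permutation, which is again a strict equivalence since permutation matrices are unitary. For each $j = 1$, $2$, \dots, $p_1$, gather the $j$th index from all four blocks into one group; because $\Sigma$ is diagonal on its leading $p_1 \times q_1$ part (recall $q_1 = p_1$) and each of the four identity blocks in $\mathcal B$ links index $j$ only with index $j$, the $4\times4$ sub-pencil supported on this group is precisely~\eqref{eq:svdfour} with $\sigma = \sigma_j$, completely decoupled from the rest. The indices left over are the trailing $p_2$ of blocks $1$ and $3$ and the trailing $q_2$ of blocks $2$ and $4$; on these the $\Sigma$- and $\Sigma^*$-parts are zero, so pairing the $j$th remaining index of block $1$ with that of block $3$, and likewise block $2$ with block $4$, decouples the remainder into $p_2 + q_2$ copies of the pencil $\minimat{0}{0}{0}{1} - \lambda\minimat{0}{1}{1}{0}$. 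A dimension count, $4p_1 + 2p_2 + 2q_2 = 2p + 2q$ using $q_1 = p_1$, confirms that nothing has been left out.

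It then remains to identify the KCF of each diagonal block. By Lemma~\ref{thm:svdfour}, each of the $p_1$ copies of~\eqref{eq:svdfour} contributes the four blocks $J_1(\sqrt{\sigma_j})$, $J_1(-\sqrt{\sigma_j})$, $J_1(i\sqrt{\sigma_j})$, $J_1(-i\sqrt{\sigma_j})$, which gives part~2. For the $2\times2$ pencil, the second matrix is a permutation matrix and hence invertible, so the pencil is strictly equivalent to $\minimat{0}{1}{1}{0}^{-1}\minimat{0}{0}{0}{1} - \lambda I = \minimat{0}{1}{0}{0} - \lambda I$, which is the single Jordan block $J_2(0)$; thus the remainder contributes $p_2 + q_2$ blocks $J_2(0)$, which is part~1. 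Assembling the diagonal blocks and appealing to uniqueness of the KCF completes the argument.

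I expect the bookkeeping of the permutation step to be the only real difficulty: one has to carry four block rows and columns of sizes $p$, $q$, $p$, $q$ through the refinement $p = p_1 + p_2$, $q = q_1 + q_2$ and, in particular, see that after extracting the $p_1$ copies of~\eqref{eq:svdfour} the leftover couples block $1$ with block $3$ (yielding $p_2$ of the $2\times2$ pencils) and block $2$ with block $4$ (yielding $q_2$ of them), rather than mixing them, which is a consequence of $\Sigma$ being rectangular. The only genuinely new computation --- the $4\times4$ building block --- has already been carried out in Lemma~\ref{thm:svdfour}.
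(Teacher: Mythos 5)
Your argument is correct and follows essentially the same route as the paper: transform by $\operatorname{diag}(U,V,U,V)$, permute so the pencil decouples into $p_1$ copies of \eqref{eq:svdfour} handled by Lemma~\ref{thm:svdfour} plus a leftover part contributing $p_2+q_2$ blocks $J_2(0)$. The only cosmetic difference is that the paper's (asymmetric) row/column permutations put the leftover part directly in the form $J_2(0)\otimes I$, whereas you identify each leftover $2\times 2$ pencil as $J_2(0)$ by inverting its nonsingular $\lambda$-coefficient, which is an equally valid strict equivalence.
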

\begin{proof}
  Let $\mathcal A_0 - \lambda \mathcal B_0 = \mathcal A - \lambda
  \mathcal B$, and define the transformations $\mathcal X_0 = \mathcal
  Y_0 = \diag(U, V, U, V)$. Then the pencil $\mathcal A_1 - \lambda
  \mathcal B_1 = \mathcal Y_0^* (\mathcal A_0 - \lambda \mathcal B_0)
  \mathcal X_0$ is a square $8 \times 8$ block matrix of dimension
  \begin{equation*}
    \underbrace{p_1 + p_2}_{p}
    {} + \underbrace{q_1 + q_2}_{q}
    {} + \underbrace{p_1 + p_2}_{p}
    {} + \underbrace{q_1 + q_2}_{q}.
  \end{equation*}
  Now let $\mathcal X_1$ and $\mathcal Y_1$ be the permutation matrices
  corresponding to the permutations
  \begin{equation*}
    \pi_{\mathcal X} = (
      2, 6, 4, 8,\quad
      1, 3, 5, 7
    )
    \quad\text{and}\quad
    \pi_{\mathcal Y} = (
      6, 2, 8, 4,\quad
      1, 3, 5, 7
    ),
  \end{equation*}
  respectively. Then the pencil $\mathcal A_2 - \lambda \mathcal B_2 =
  \mathcal Y_1^* (\mathcal A_1 - \lambda \mathcal B_1) \mathcal X_1$ is
  block diagonal, and has the following blocks along its diagonal.
  \begin{enumerate}
    \item The $(p_2 + p_2 + q_2 + q_2) \times (p_2 + p_2 + q_2 + q_2)$
      block
      \begin{equation*}
        \begin{bmatrix}
          0 & I & 0 & 0 \\
          0 & 0 & 0 & 0 \\
          0 & 0 & 0 & I \\
          0 & 0 & 0 & 0
        \end{bmatrix}
        {} - \lambda
        \begin{bmatrix}
          I & 0 & 0 & 0 \\
          0 & I & 0 & 0 \\
          0 & 0 & I & 0 \\
          0 & 0 & 0 & I
        \end{bmatrix}
        =
        \begin{bmatrix}
          J_2(0) \otimes I \\
          & J_2(0) \otimes I
        \end{bmatrix},
      \end{equation*}
      which yields $p_2 + q_2$ blocks $J_2(0)$ in the KCF of
      \eqref{eq:sqfreeSVD} after suitable permutations.

    \item The $(p_1 + q_1 + p_1 + q_1) \times (p_1 + q_1 + p_1 + q_1)$
      block
      \begin{equation*}
        \begin{bmatrix}
          0 & D_\sigma & 0 & 0 \\
          D_\sigma & 0 & 0 & 0 \\
          0 & 0 & I & 0 \\
          0 & 0 & 0 & I
        \end{bmatrix}
        {} - \lambda
        \begin{bmatrix}
          0 & 0 & I & 0 \\
          0 & 0 & 0 & I \\
          I & 0 & 0 & 0 \\
          0 & I & 0 & 0
        \end{bmatrix},
      \end{equation*}
      which reduces to a diagonal matrix with the Jordan ``blocks''
      $J_1(\pm\sqrt{\pm\sigma_j})$ after suitable permutations and
      applying Lemma~\ref{thm:svdfour}.
  \end{enumerate}
\end{proof}



\begin{corollary}\label{thm:svdvecs}
  The (right) eigenvectors belonging to the eigenvalues
  $\sqrt{\sigma_j}$, $-\sqrt{\sigma_j}$, $i\sqrt{\sigma_j}$, and
  $-i\sqrt{\sigma_j}$ are
  \begin{equation*}
    \begin{bmatrix}
      u_j \\
      v_j \\
      \sqrt{\sigma_j} u_j \\
      \sqrt{\sigma_j} v_j
    \end{bmatrix},
    \qquad
    \begin{bmatrix}
      -u_j \\
      -v_j \\
      \sqrt{\sigma_j} u_j \\
      \sqrt{\sigma_j} v_j
    \end{bmatrix},
    \qquad
    \begin{bmatrix}
      -iu_j \\
      \phantom{-}iv_j \\
      \phantom{-}\sqrt{\sigma_j} u_j \\
      -\sqrt{\sigma_j} v_j
    \end{bmatrix},
    \quad\text{and}\quad
    \begin{bmatrix}
      \phantom{-}iu_j \\
      -iv_j \\
      \phantom{-}\sqrt{\sigma_j} u_j \\
      -\sqrt{\sigma_j} v_j
    \end{bmatrix},
  \end{equation*}
  respectively. Here $u_j = U e_j$ and $v_j = V e_j$.
\end{corollary}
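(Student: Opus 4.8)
The plan is to verify the statement directly from the definition of a right generalized eigenvector of the pencil~\eqref{eq:sqfreeSVD}. Partition a candidate vector as $x = (x_1; x_2; x_3; x_4)$ conformally with the block rows of \eqref{eq:sqfreeSVD}, so that $x_1, x_3 \in \mathbb{C}^{p}$ and $x_2, x_4 \in \mathbb{C}^{q}$. Writing out $\mathcal A x = \lambda \mathcal B x$ block by block then yields the four relations $A x_2 = \lambda x_3$, $A^* x_1 = \lambda x_4$, $x_3 = \lambda x_1$, and $x_4 = \lambda x_2$. Using the last two relations to eliminate $x_3$ and $x_4$ reduces the system to the coupled pair $A x_2 = \lambda^2 x_1$ and $A^* x_1 = \lambda^2 x_2$, after which $x_3$ and $x_4$ are recovered as $\lambda x_1$ and $\lambda x_2$.

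Next I would bring in the OSVD of Theorem~\ref{thm:svd}. Since $\sigma_j > 0$ for $j = 1, \dots, p_1$, the vectors $u_j = U e_j$ and $v_j = V e_j$ satisfy $A v_j = \sigma_j u_j$ and $A^* u_j = \sigma_j v_j$. For each of the four eigenvalues we have $\lambda^2 = \sigma_j$ (for $\lambda = \pm\sqrt{\sigma_j}$) or $\lambda^2 = -\sigma_j$ (for $\lambda = \pm i\sqrt{\sigma_j}$). Substituting $x_1 = \alpha u_j$ and $x_2 = \beta v_j$ turns $A x_2 = \lambda^2 x_1$ and $A^* x_1 = \lambda^2 x_2$ into the scalar conditions $\beta \sigma_j = \lambda^2 \alpha$ and $\alpha \sigma_j = \lambda^2 \beta$; these force $\beta = \alpha$ when $\lambda^2 = \sigma_j$ and $\beta = -\alpha$ when $\lambda^2 = -\sigma_j$. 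Choosing $(\alpha,\beta)$ equal to $(1,1)$, $(-1,-1)$, $(-i,i)$, and $(i,-i)$ respectively, and then setting $x_3 = \lambda x_1$ and $x_4 = \lambda x_2$, reproduces exactly the four vectors in the statement once the powers of $i$ are simplified. (Alternatively, the same vectors fall out of the proof of Theorem~\ref{thm:sqfreeSVD} by pulling the columns of the matrix $\mathcal X$ from Lemma~\ref{thm:svdfour} back through the permutation $\pi_{\mathcal X}$ and the transformation $\mathcal X_0 = \diag(U, V, U, V)$, up to scaling; the direct substitution above is the shorter route.)

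Finally I would observe that these are genuine eigenvectors and not generalized eigenvectors: by Theorem~\ref{thm:sqfreeSVD} together with the convention recorded at the end of Section~\ref{sec:kcf}, every Jordan block attached to a nonzero finite eigenvalue has size one, and all of $\pm\sqrt{\sigma_j}$ and $\pm i\sqrt{\sigma_j}$ are nonzero because $\sigma_j > 0$; moreover each listed vector is nonzero since $U$ and $V$ are unitary. There is no real obstacle here — the argument is routine verification — so the only thing to be careful about is the bookkeeping: using the block partition $p + q + p + q$ consistently when forming $\mathcal A x$ and $\mathcal B x$, and tracking the signs and the factors of $i$ correctly across the four cases.
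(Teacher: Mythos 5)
Your verification is correct: with the block partition $p+q+p+q$ the pencil \eqref{eq:sqfreeSVD} does reduce the eigenvalue equation to $Ax_2=\lambda x_3$, $A^*x_1=\lambda x_4$, $x_3=\lambda x_1$, $x_4=\lambda x_2$, and substituting $x_1=\alpha u_j$, $x_2=\beta v_j$ with the four choices $(\alpha,\beta)=(1,1),(-1,-1),(-i,i),(i,-i)$ reproduces exactly the listed vectors; I checked the signs and factors of $i$ in all four cases. Your route differs slightly from the one the paper implicitly intends: the corollary is stated without proof because it drops out of the proof of Theorem~\ref{thm:sqfreeSVD}, by taking the columns of the matrix $\mathcal X$ of Lemma~\ref{thm:svdfour} (the eigenvectors of the diagonalized $4\times4$ pencil), undoing the permutation $\pi_{\mathcal X}$, and applying $\mathcal X_0=\diag(U,V,U,V)$ --- the route you mention only parenthetically. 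Your direct substitution is shorter and self-contained, and it has the advantage of not requiring any bookkeeping with the permutations; the construction-based route has the advantage that it explains where the vectors come from and transfers mechanically to the analogous eigenvector corollaries for the QSVD and RSVD (Corollaries~\ref{thm:qsvdvecs} and~\ref{thm:rsvdvecs}), where the first two block components acquire the factors $\gamma_j^{-1}$ and $\beta_j^{-1}$ from the scaling matrices in Lemmas~\ref{thm:qsvdfour} and~\ref{thm:rsvdfour}. Your closing remarks (nonzero vectors since $U,V$ are unitary, simple Jordan blocks at nonzero finite eigenvalues) are correct but not needed for the statement as given.
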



As we will see in the next two sections, we can follow the same general
approach with the QSVD and the RSVD. That is, we start with the
``obvious'' transformation of the pencils, permute the resulting block
matrices, and invoke an analogue of Lemma~\ref{thm:svdfour}.



\section{The quotient singular value decomposition}\label{sec:qsvd}

The QSVD can be used to solve, for example, generalized eigenvalue
problems of the form \eqref{eq:sqQSVD}, generalized total least squares
problems, general-form Tikhonov regularization, least squares problems
with equality constraints, etc.; see, e.g., Van~Loan~\cite{VanL76} and
Bai~\cite{ZB92gsvd} for more information. The QSVD is defined the
theorem below and comes from Paige and Saunders~\cite{PS81}, but has the
blocks of the partitioned matrices permuted to be closer to a special
case of the RSVD from Section~\ref{sec:rsvd}.


\begin{theorem}[Quotient singular value decomposition]\label{thm:qsvd}
  Let $A \in \mathbb{C}^{p \times q}$ and $C \in \mathbb{C}^{n \times
  q}$; then there exist a nonsingular matrix $Y \in \mathbb{C}^{q \times
  q}$ and unitary matrices $U \in \mathbb{C}^{p \times p}$ and $V \in
  \mathbb{C}^{n \times n}$ such that
  \begin{equation*}
    U^*\!AY =\;
    \bordermatrix[{[]}]{%
        & q_1 & q_2 & q_3      & q_4 \cr
    p_1 & 0   & 0   & D_\alpha & 0 \cr
    p_2 & 0   & 0   & 0        & I \cr
    p_3 & 0   & 0   & 0        & 0
    }
    \quad\text{and}\quad
    V^*CY =\;
    \bordermatrix[{[]}]{%
        & q_1 & q_2 & q_3      & q_4 \cr
    n_1 & 0   & I   & 0        & 0 \cr
    n_2 & 0   & 0   & D_\gamma & 0 \cr
    n_3 & 0   & 0   & 0        & 0
    },
  \end{equation*}
  where $n_2 = p_1 = q_3$, $n_1 = q_2$, and $p_2 = q_4$. Furthermore,
  $D_\alpha = \diag(\alpha_1, \dots, \alpha_{p_1})$ and $D_\gamma =
  \diag(\gamma_1, \dots, \gamma_{p_1})$ are such that $\alpha_j,
  \gamma_j > 0$ and $\alpha_j^2 + \gamma_j^2 = 1$ for all $j = 1$, $2$,
  \dots, $p_1$. These $p_1 = q_3$ pairs $(\alpha_j, \gamma_j)$ together
  with $q_2$ pairs $(0,1)$ and $q_4$ pairs $(1,0)$ are called the
  nontrivial pairs. The remaining $q_1$ pairs $(0,0)$ are called the
  trivial pairs.  Each nontrivial pair $(\alpha, \gamma)$ corresponds to
  a quotient singular value $\sigma = \alpha / \gamma$, where the result
  is $\infty$ by convention if $\gamma = 0$.
\end{theorem}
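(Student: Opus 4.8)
The plan is to derive this form from the CS decomposition applied to an orthonormal basis for the column space of the stacked matrix; this is in essence the construction of Paige and Saunders~\cite{PS81}, and beyond invoking that classical tool the only remaining work is keeping track of block sizes and performing one permutation to reach the displayed layout.

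First I would form $M = \begin{bmatrix} A \\ C \end{bmatrix} \in \mathbb{C}^{(p+n) \times q}$ and set $t = \rank M$. A rank-revealing factorization supplies a unitary $Z \in \mathbb{C}^{q \times q}$ with $MZ = \begin{bmatrix} R & 0 \end{bmatrix}$, where $R \in \mathbb{C}^{(p+n) \times t}$ has full column rank; a thin QR (or polar) factorization then writes $R = Q\tilde R$ with $Q$ having orthonormal columns and $\tilde R \in \mathbb{C}^{t \times t}$ nonsingular. Partitioning $Q = \begin{bmatrix} Q_1 \\ Q_2 \end{bmatrix}$ conformally with the row split $p \mid n$ gives $Q_1^* Q_1 + Q_2^* Q_2 = I_t$.

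Next I would invoke the CS decomposition of $\begin{bmatrix} Q_1 \\ Q_2 \end{bmatrix}$: there exist unitary $U \in \mathbb{C}^{p \times p}$, $V \in \mathbb{C}^{n \times n}$, and $W \in \mathbb{C}^{t \times t}$ such that $U^* Q_1 W$ and $V^* Q_2 W$ are nonnegative ``diagonal'' matrices whose squares sum to $I_t$. Up to a permutation of columns and rows, $U^* Q_1 W$ consists of an identity block (cosines equal to $1$), a diagonal block $D_\alpha$ with entries strictly between $0$ and $1$, and zero rows and columns; correspondingly $V^* Q_2 W$ consists of an identity block $I$ (where the matching cosine is $0$), the complementary diagonal block $D_\gamma$ with $D_\alpha^2 + D_\gamma^2 = I$, and zero rows and columns. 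Taking $Y = Z\,\diag(\tilde R^{-1} W, I_{q-t})$, which is nonsingular, yields $U^* A Y = \begin{bmatrix} U^* Q_1 W & 0 \end{bmatrix}$ and $V^* C Y = \begin{bmatrix} V^* Q_2 W & 0 \end{bmatrix}$, where the trailing $q - t$ zero columns furnish the trivial pairs; these are all of them, because the columns of $Q$ are orthonormal hence nonzero, so none of the first $t$ columns can be a $(0,0)$ pair, whence $q_1 = q - t$.

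It then remains to read off the counts and align the blocks: setting $q_3 = p_1 = n_2$ to the number of interior pairs, $q_2 = n_1$ to the number of vanishing cosines of $Q_1$, $q_4 = p_2$ to the number of unit cosines, and letting $p_3$ and $n_3$ absorb the leftover rows, a single pair of block permutations applied to $U$, $V$, and to the columns of $Y$ brings the two matrices into the exact row/column layout of the statement, with $\alpha_j, \gamma_j > 0$ and $\alpha_j^2 + \gamma_j^2 = 1$ inherited directly from the CS decomposition. The one point requiring care is that the same column permutation must act on $Y$ for both $AY$ and $CY$, so the block orderings in the two partitioned matrices are forced to be mutually compatible; verifying this compatibility is the only non-mechanical part. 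Alternatively, since the statement is precisely the Paige--Saunders QSVD~\cite{PS81} with its blocks reordered, one may simply cite that result and exhibit the permutation relating the two layouts.
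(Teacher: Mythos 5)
Your proposal is correct and matches the paper's treatment: the paper gives no independent proof of this theorem, presenting it as the Paige--Saunders QSVD \cite{PS81} with the blocks of the partitioned matrices permuted, and your CS-decomposition construction (stack $A$ and $C$, extract an orthonormal factor of rank $t=\rank[A;\,C]$, apply the CSD to its row partition, absorb the nonsingular triangular factor into $Y$, and permute blocks) is precisely the standard argument behind that cited result, with the block-size bookkeeping ($q_1=q-t$, $p_1=q_3=n_2$, $n_1=q_2$, $p_2=q_4$) handled correctly.
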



As before, we first consider the one dimensional case and generalize
Lemma~\ref{thm:svdfour} to the QSVD.


\begin{lemma}\label{thm:qsvdfour}
  Suppose $\alpha$ and $\gamma$ are positive real numbers and consider
  the pencil
  \begin{equation*}
    \mathcal A - \lambda \mathcal B =
    \begin{bmatrix}
      0 & \alpha & 0 & 0 \\
      \alpha & 0 & 0 & 0 \\
      0 & 0 & 1 & 0 \\
      0 & 0 & 0 & 1
    \end{bmatrix}
    {} - \lambda
    \begin{bmatrix}
      0 & 0 & 1 & 0 \\
      0 & 0 & 0 & \gamma \\
      1 & 0 & 0 & 0 \\
      0 & \gamma & 0 & 0
    \end{bmatrix}.
  \end{equation*}
  Then the nonsingular matrices
  \begin{equation*}
    \mathcal X = \mathcal Y = \sigma^{-1/4} \diag(
      1, \gamma^{-1}, \sqrt\sigma, \sqrt\sigma),
  \end{equation*}
  where $\sigma = \alpha / \gamma$, are such that
  $\mathcal Y^* (\mathcal A - \lambda \mathcal B) \mathcal X$ is a
  pencil of the form \eqref{eq:svdfour}.
\end{lemma}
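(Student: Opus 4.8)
The plan is to prove Lemma~\ref{thm:qsvdfour} by direct verification, exactly in the spirit of Lemma~\ref{thm:svdfour}. The observation that makes the computation painless is that $\mathcal X = \mathcal Y$ is a \emph{real diagonal} matrix; write $D = \sigma^{-1/4}\diag(1, \gamma^{-1}, \sqrt\sigma, \sqrt\sigma)$, so that $\mathcal Y^*(\mathcal A - \lambda\mathcal B)\mathcal X = D\mathcal A D - \lambda\, D\mathcal B D$, and each congruence $D M D$ merely rescales the $(i,j)$ entry of $M$ by $D_{ii}D_{jj}$. Since $\alpha,\gamma > 0$ we have $\sigma = \alpha/\gamma > 0$, so every diagonal entry of $D$ is a well-defined positive real and $D$ is nonsingular; this is the only regularity claim in the statement.

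Next I would record the four products of diagonal entries that occur: $D_{11}D_{22} = \sigma^{-1/2}\gamma^{-1}$, $D_{11}D_{33} = \sigma^{-1/4}\cdot\sigma^{1/4} = 1$, $D_{22}D_{44} = \sigma^{-1/4}\gamma^{-1}\cdot\sigma^{1/4} = \gamma^{-1}$, and $D_{33}^2 = \sigma^{1/2}$. Applying these to $\mathcal B$, whose only nonzero entries are $1$ in positions $(1,3),(3,1)$ and $\gamma$ in positions $(2,4),(4,2)$, the $\gamma$ entries become $\gamma^{-1}\cdot\gamma = 1$ and the rest are unchanged, so $D\mathcal B D$ is exactly the $\mathcal B$ of \eqref{eq:svdfour}. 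Applying them to $\mathcal A$, whose nonzero entries are $\alpha$ in positions $(1,2),(2,1)$ and $1$ in positions $(3,3),(4,4)$, and using $\alpha = \sigma\gamma$, the $(1,2)$ and $(2,1)$ entries become $\sigma^{-1/2}\gamma^{-1}\alpha = \sigma^{1/2}$ while the $(3,3),(4,4)$ entries become $\sigma^{1/2}$; hence $D\mathcal A D$ has precisely the zero/nonzero pattern and symmetry of the $\mathcal A$ in \eqref{eq:svdfour}.

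Putting the two computations together, $\mathcal Y^*(\mathcal A - \lambda\mathcal B)\mathcal X$ is a pencil of the form \eqref{eq:svdfour}; the residual common factor $\sqrt\sigma$ sitting in the $\mathcal A$-part can be removed by one further trivial diagonal congruence (for instance by $\diag(\sigma^{1/4},\sigma^{1/4},\sigma^{-1/4},\sigma^{-1/4})$, which leaves the $\mathcal B$-part fixed and turns the parameter into $\sigma$ itself). In particular Lemma~\ref{thm:svdfour} now applies, showing that this $4\times4$ pencil is strictly equivalent to $\diag(\sqrt\sigma, -\sqrt\sigma, i\sqrt\sigma, -i\sqrt\sigma) - \lambda I$, which is the consequence we will want when feeding this lemma into the proof of the QSVD reformulation theorem.

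There is no real obstacle here beyond careful bookkeeping: the only thing to watch is the pileup of fractional powers of $\sigma$ — the $\sigma^{-1/4}$, $\sqrt\sigma$, and $\sigma^{-1/2}$ factors — together with the substitution $\alpha = \sigma\gamma$, which is what collapses the mixed $\alpha$/$\gamma$ products down to powers of $\sigma$ alone. Everything else is a two-line entrywise check. If one wanted the cleanest possible transformation one could instead take $\mathcal X = \mathcal Y = \diag(1, \gamma^{-1}, 1, 1)$, which lands directly on \eqref{eq:svdfour} with parameter $\sigma$; the symmetric choice in the statement distributes the powers of $\sigma$ more evenly over the four coordinates, which is the preferable scaling in finite precision.
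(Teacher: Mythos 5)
Your arithmetic is correct and the route is the same one the paper intends (direct entrywise verification of a diagonal congruence), but your own computation shows more than you acknowledge: with the stated $\mathcal X=\mathcal Y=\sigma^{-1/4}\diag(1,\gamma^{-1},\sqrt\sigma,\sqrt\sigma)$ you get $\mathcal Y^*\mathcal B\mathcal X$ equal to the $\mathcal B$ of \eqref{eq:svdfour}, but $\mathcal Y^*\mathcal A\mathcal X=\sqrt\sigma\,\bigl(\minimat{0}{1}{1}{0}\oplus I_2\bigr)$, whose $(3,3)$ and $(4,4)$ entries are $\sqrt\sigma$, not $1$. So the transformed pencil is \emph{not} literally of the form \eqref{eq:svdfour} unless $\sigma=1$; the sentence ``is a pencil of the form \eqref{eq:svdfour}'' that you assert is contradicted by the two entries you just computed. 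What your argument actually establishes is an amended claim: the stated congruence produces $\sqrt\sigma\,(\minimat{0}{1}{1}{0}\oplus I_2)-\lambda\mathcal B_{\eqref{eq:svdfour}}$, and one further congruence by $\diag(\sigma^{1/4},\sigma^{1/4},\sigma^{-1/4},\sigma^{-1/4})$ (equivalently, taking $\mathcal X=\mathcal Y=\diag(1,\gamma^{-1},1,1)$ from the start, as you observe) lands exactly on \eqref{eq:svdfour} with parameter $\sigma$. A correct write-up must either use that cleaner transformation, or state explicitly that ``of the form'' is meant only up to the scalar factor $\sqrt\sigma$ in the $\mathcal A$-part; silently absorbing the residual factor while claiming the literal statement is the one genuine flaw in the proposal.

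It is worth adding why this discrepancy is harmless for the way the lemma is used, which your last paragraph gestures at but does not pin down: the matrices $\mathcal X,\mathcal Y$ of Lemma~\ref{thm:svdfour} satisfy $\mathcal Y^*\bigl(c\,(\minimat{0}{1}{1}{0}\oplus I_2)-\lambda\mathcal B\bigr)\mathcal X=c\,\diag(1,-1,i,-i)-\lambda I$ for any scalar $c>0$, so composing the scaling of the present lemma with those unitaries still yields $\diag(\sqrt\sigma,-\sqrt\sigma,i\sqrt\sigma,-i\sqrt\sigma)-\lambda I$, which is exactly what the proof of Theorem~\ref{thm:sqfreeQSVD} needs, and this composite scaling is also what makes the eigenvectors come out in the balanced form recorded in Corollary~\ref{thm:qsvdvecs} (your ``cleanest'' choice $\diag(1,\gamma^{-1},1,1)$ would not). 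So: keep your verification, keep the extra congruence, but present the residual $\sqrt\sigma$ as a correction to (or explicit reinterpretation of) the statement rather than as something compatible with it as worded.
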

\begin{proof}
  The proof is by direct verification.
\end{proof}


Using the above definition of the QSVD in Theorem~\ref{thm:qsvd}, and
the reduction in Lemma~\ref{thm:qsvdfour}, we can state and prove the
following theorem and corollary.


\begin{theorem}\label{thm:sqfreeQSVD}
  Let $A$, $C$, and their corresponding QSVD be as in
  Theorem~\ref{thm:qsvd}. Then the KCF of the pencil
  \begin{equation}\label{eq:sqfreeQSVD}
    \mathcal A - \mathcal B =
    \begin{bmatrix}
      0 & A & 0 & 0 \\
      A^* & 0 & 0 & 0 \\
      0 & 0 & I & 0 \\
      0 & 0 & 0 & I
    \end{bmatrix}
    {} - \lambda
    \begin{bmatrix}
      0 & 0 & I & 0 \\
      0 & 0 & 0 & C^* \\
      I & 0 & 0 & 0 \\
      0 & C & 0 & 0
    \end{bmatrix}
  \end{equation}
  consists of the following blocks.
  \begin{enumerate}
    \item A $q_1 \times q_1$ zero block, which corresponds to quotient
      singular pairs of the form $(0, 0)$.
    \item A series of $n_3$ blocks $N_1$, which correspond to $(1,0)$
      pairs.
    \item A series of $p_2$ blocks $N_3$, which correspond to $(1,0)$
      pairs.
    \item A series of $p_3 + q_2$ blocks $J_2(0)$, which correspond to
      $(0, 1)$ pairs.
    \item The blocks
      $J_1(\sqrt{\sigma_1})$, \dots, $J_1(\sqrt{\sigma_{p_1}})$, 
      $J_1(-\sqrt{\sigma_1})$, \dots, $J_1(-\sqrt{\sigma_{p_1}})$,
      $J_1(i\sqrt{\sigma_1})$, \dots, $J_1(i\sqrt{\sigma_{p_1}})$,
      $J_1(-i\sqrt{\sigma_1})$, \dots, $J_1(-i\sqrt{\sigma_{p_1}})$,
      where $\sigma_1$, \dots, $\sigma_{p_1}$ are the finite and nonzero
      quotient singular values of the matrix pair $(A, C)$.
  \end{enumerate}
\end{theorem}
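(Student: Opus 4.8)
The plan is to follow the same three-step recipe used for Theorem~\ref{thm:sqfreeSVD}: first undo the decomposition by an ``obvious'' equivalence, then permute the result into block-diagonal form, and finally reduce each diagonal block with the help of Lemma~\ref{thm:qsvdfour} and Lemma~\ref{thm:svdfour}. For the first step, write $\Sigma_A = U^*AY$ and $\Sigma_C = V^*CY$ for the partitioned matrices of Theorem~\ref{thm:qsvd}, and set $\mathcal{X}_0 = \mathcal{Y}_0 = \diag(U, Y, U, V)$, which is nonsingular because $U$, $V$ are unitary and $Y$ is nonsingular. Using $A = U\Sigma_A Y^{-1}$ and $C = V\Sigma_C Y^{-1}$, one checks block by block that $\mathcal{A}_1 - \lambda\mathcal{B}_1 := \mathcal{Y}_0^*(\mathcal{A} - \lambda\mathcal{B})\mathcal{X}_0$ is exactly the pencil \eqref{eq:sqfreeQSVD} with $A$, $A^*$, $C$, $C^*$ replaced by $\Sigma_A$, $\Sigma_A^*$, $\Sigma_C$, $\Sigma_C^*$ and the identity blocks left untouched; the key identities are $Y^*A^*U = \Sigma_A^*$ and $Y^*C^*V = \Sigma_C^*$.

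For the second step I would refine the $4\times 4$ block structure of $\mathcal{A}_1 - \lambda\mathcal{B}_1$ using the row partitions $p = p_1+p_2+p_3$, $q = q_1+q_2+q_3+q_4$, $n = n_1+n_2+n_3$ coming from $\Sigma_A$ and $\Sigma_C$ (and the matching column partitions), so that each resulting subblock is a zero block, an identity block, $D_\alpha$, or $D_\gamma$. This produces thirteen row groups and thirteen column groups. Reading off which row group couples to which column group (in $\mathcal{A}_1$ or in $\mathcal{B}_1$), the coupling pattern splits into: (i) the $q_1$ rows and $q_1$ columns indexed by the all-zero columns of both $\Sigma_A$ and $\Sigma_C$, coupled to nothing; (ii) a $4p_1\times 4p_1$ piece involving $D_\alpha$, $D_\gamma$, and two identity blocks; (iii) a $3p_2\times 3p_2$ piece built around the $I$ block of $\Sigma_A$ (recall $p_2 = q_4$); (iv) a $2p_3\times 2p_3$ piece and a $2q_2\times 2q_2$ piece (recall $n_1 = q_2$); and (v) an $n_3\times n_3$ piece, consisting of an $I$ from the fourth diagonal block with no $\mathcal{B}_1$-coupling. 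As in the OSVD proof, a single pair of block permutations $\mathcal{X}_1$, $\mathcal{Y}_1$ then makes the pencil block diagonal with these six pieces on the diagonal; the dimensions sum to $q_1 + 4p_1 + 3p_2 + 2p_3 + 2q_2 + n_3$, which equals $2p + q + n$ after using $n_2 = p_1 = q_3$, $n_1 = q_2$, $p_2 = q_4$.

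For the third step I would bring each piece to KCF. Piece~(i) is already a $q_1\times q_1$ zero block, giving item~1. Piece~(ii), after an inner permutation grouping the $j$th entries of $D_\alpha$ and $D_\gamma$, is a direct sum of $p_1$ copies of the $4\times 4$ pencil of Lemma~\ref{thm:qsvdfour} with $(\alpha,\gamma) = (\alpha_j,\gamma_j)$; Lemma~\ref{thm:qsvdfour} turns each copy into a pencil of the form \eqref{eq:svdfour} with $\sigma = \sigma_j = \alpha_j/\gamma_j$, and Lemma~\ref{thm:svdfour} then splits it into the four blocks $J_1(\pm\sqrt{\pm\sigma_j})$, giving item~5 — with $\sigma_j$ finite and nonzero precisely because $\alpha_j,\gamma_j>0$. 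Each of pieces~(iii)--(v) is a direct sum of copies of one fixed small pencil, whose KCF is read off from the degree of its determinant together with the rank of its $\mathcal{A}$- or $\mathcal{B}$-part: the $3\times 3$ pencil of~(iii) is regular with constant determinant and $\mathcal{B}$-part of rank $2$, hence the infinite eigenvalue has geometric multiplicity one and the block is $N_3$, for $p_2$ blocks $N_3$ (item~3); the $2\times 2$ pencils of~(iv) have determinant a scalar multiple of $\lambda^2$ with $\mathcal{A}$-part of rank $1$, hence a single $J_2(0)$ each, for $p_3 + q_2$ blocks $J_2(0)$ in total (item~4); and the $1\times 1$ pencil of~(v) is $1 - 0\cdot\lambda$, i.e.\ a block $N_1$, for $n_3$ blocks $N_1$ (item~2). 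Collecting the pieces gives the five families claimed.

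The main obstacle is bookkeeping rather than any deep difficulty: one must carefully determine the refined coupling pattern — which of the thirteen row groups meets which of the thirteen column groups, in $\mathcal{A}_1$ versus $\mathcal{B}_1$ — and exhibit explicit permutations $\pi_{\mathcal X}$, $\pi_{\mathcal Y}$ realizing the block-diagonal decomposition; the RSVD treatment in the next section only makes this more laborious, but the mechanism is identical. A secondary point worth recording cleanly, perhaps as a one-line remark alongside each tiny pencil, is the KCF of the three small pencils arising in pieces~(iii)--(v), which are otherwise immediate by inspection.
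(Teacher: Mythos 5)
Your proposal is correct and follows essentially the same route as the paper's proof: the strict equivalence $\diag(U,Y,U,V)$, refinement into thirteen row and column groups, block-diagonalization by permutations, and reduction of the regular part via Lemma~\ref{thm:qsvdfour} together with Lemma~\ref{thm:svdfour}. The only cosmetic difference is that the paper exhibits the permutations $\pi_{\mathcal X}$, $\pi_{\mathcal Y}$ explicitly and reads off the remaining pieces directly as $N_1 \otimes I$, $N_3 \otimes I$, and $J_2(0) \otimes I$, whereas you identify the same blocks through determinant-degree and rank arguments.
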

\begin{proof}
  Let $\mathcal A_0 - \lambda \mathcal B_0 = \mathcal A - \lambda
  \mathcal B$, and define the transformations $\mathcal X_0 = \mathcal
  Y_0 = \diag(U, Y, U, V)$. Then the pencil $\mathcal A_1 - \lambda
  \mathcal B_1 = \mathcal Y_0^* (\mathcal A_0 - \lambda \mathcal B_0)
  \mathcal X_0$ is a square $13 \times 13$ block matrix of dimension
  \begin{equation*}
    \underbrace{p_1 + p_2 + p_3}_{p}
    {} + \underbrace{q_1 + q_2 + q_3 + q_4}_{q}
    {} + \underbrace{p_1 + p_2 + p_3}_{p}
    {} + \underbrace{n_1 + n_2 + n_3}_{n}.
  \end{equation*}
  Now let $\mathcal X_1$ and $\mathcal Y_1$ be the permutation matrices
  corresponding to the permutations
  \begin{equation*}
    \begin{split}
      \pi_{\mathcal X} &= (
        4,\quad
        13,\quad
        7, 9, 2,\quad
        3, 10, 5, 11,\quad
        1, 6, 8, 12
      ),
      \\
      \pi_{\mathcal Y} &= (
        4,\quad
        13,\quad
        2, 9, 7,\quad
        10, 3, 11, 5,\quad
        1, 6, 8, 12
      ),
    \end{split}
  \end{equation*}
  respectively. Then the pencil $\mathcal A_2 - \lambda \mathcal B_2 =
  \mathcal Y_1^* (\mathcal A_1 - \lambda \mathcal B_1) \mathcal X_1$ is
  block diagonal, and has the following blocks along its diagonal.
  \begin{enumerate}
    \item A $q_1 \times q_1$ block of zeros.
    \item The $n_3 \times n_3$ block $I - \lambda \cdot 0 = N_1 \otimes
      I$.
    \item The $(p_2 + p_2 + q_4) \times (q_4 + p_2 + p_2)$ block
      \begin{equation*}
        \begin{bmatrix}
          I & 0 & 0 \\
          0 & I & 0 \\
          0 & 0 & I
        \end{bmatrix}
        {} - \lambda
        \begin{bmatrix}
          0 & I & 0 \\
          0 & 0 & I \\
          0 & 0 & 0
        \end{bmatrix}
        = N_3 \otimes I,
      \end{equation*}
      where $p_2 = q_4$.
    \item The $(p_3 + p_3 + q_2 + n_1) \times (p_3 + p_3 + n_1 + q_2)$
      block
      \begin{equation*}
        \begin{bmatrix}
          0 & I & 0 & 0 \\
          0 & 0 & 0 & 0 \\
          0 & 0 & 0 & I \\
          0 & 0 & 0 & 0
        \end{bmatrix}
        {} - \lambda
        \begin{bmatrix}
          I & 0 & 0 & 0 \\
          0 & I & 0 & 0 \\
          0 & 0 & I & 0 \\
          0 & 0 & 0 & I
        \end{bmatrix}
        =
        \begin{bmatrix}
          J_2(0) \otimes I \\
          & J_2(0) \otimes I
        \end{bmatrix},
      \end{equation*}
      where $n_1 = q_2$, which yields $p_3 + q_2$ blocks $J_2(0)$ in the
      KCF of \eqref{eq:sqfreeQSVD} after suitable permutations.

    \item The $(p_1 + q_3 + p_1 + n_2) \times (p_1 + q_3 + p_1 + n_2)$
      block
      \begin{equation*}
        \begin{bmatrix}
          0 & D_\alpha & 0 & 0 \\
          D_\alpha & 0 & 0 & 0 \\
          0 & 0 & I & 0 \\
          0 & 0 & 0 & I
        \end{bmatrix}
        {} - \lambda
        \begin{bmatrix}
          0 & 0 & I & 0 \\
          0 & 0 & 0 & D_\gamma \\
          I & 0 & 0 & 0 \\
          0 & D_\gamma & 0 & 0
        \end{bmatrix},
      \end{equation*}
      where $n_2 = p_1 = q_3$, which reduces to a diagonal matrix with
      the Jordan ``blocks'' $J_1(\pm\sqrt{\pm\sigma_j})$ after suitable
      permutations and applying Lemma~\ref{thm:qsvdfour}.
  \end{enumerate}
\end{proof}



\begin{corollary}\label{thm:qsvdvecs}
  The eigenvectors belonging to the nonzero finite eigenvalues
  $\sqrt{\sigma_j}$, $-\sqrt{\sigma_j}$, $i\sqrt{\sigma_j}$, and
  $-i\sqrt{\sigma_j}$ are
  \begin{equation*}
    \begin{bmatrix}
      u_j \\
      \gamma_j^{-1} y_j \\
      \sqrt{\sigma_j} u_j \\
      \sqrt{\sigma_j} v_j
    \end{bmatrix},
    \qquad
    \begin{bmatrix}
      -u_j \\
      -\gamma_j^{-1} y_j \\
      \sqrt{\sigma_j} u_j \\
      \sqrt{\sigma_j} v_j
    \end{bmatrix},
    \qquad
    \begin{bmatrix}
      -iu_j \\
      \phantom{-}i\gamma_j^{-1} y_j \\
      \phantom{-}\sqrt{\sigma_j} u_j \\
      -\sqrt{\sigma_j} v_j
    \end{bmatrix},
    \quad\text{and}\quad
    \begin{bmatrix}
      \phantom{-}iu_j \\
      -i\gamma_j^{-1} y_j \\
      \phantom{-}\sqrt{\sigma_j} u_j \\
      -\sqrt{\sigma_j} v_j
    \end{bmatrix},
  \end{equation*}
  respectively. Here $u_j = U e_j$, $v_j = V e_j$, and $y_j = Y e_{q_1 +
  j}$.
\end{corollary}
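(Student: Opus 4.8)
The plan is to recycle the explicit chain of strict equivalences built in the proof of Theorem~\ref{thm:sqfreeQSVD} and then read off the relevant columns of the accumulated right transformation. Write the reduction to Kronecker canonical form as $\mathcal Y^\ast(\mathcal A-\lambda\mathcal B)\mathcal X=\mathcal K$, with $\mathcal X=\mathcal X_0\mathcal X_1$ and $\mathcal Y=\mathcal Y_0\mathcal Y_1$ exactly the matrices from that proof, so $\mathcal X_0=\diag(U,Y,U,V)$ and $\mathcal X_1$ is the permutation associated with $\pi_{\mathcal X}$. Since $\mathcal Y$ is nonsingular, for every finite $\mu$ the eigenspace of $\mathcal A-\lambda\mathcal B$ at $\mu$ equals $\mathcal X\ker\mathcal K(\mu)$, and each $J_1(\mu)$ block of $\mathcal K$ contributes precisely the canonical basis vector at its position to $\ker\mathcal K(\mu)$. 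Hence each eigenvector in the statement is, up to the inner transformation recorded next, an appropriate column of $\mathcal X$, and the rest is bookkeeping.

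First I would pin down which columns are relevant. By Theorem~\ref{thm:sqfreeQSVD} the finite nonzero eigenvalues $\pm\sqrt{\sigma_j}$ and $\pm i\sqrt{\sigma_j}$ come solely from the fifth diagonal block of $\mathcal A_2-\lambda\mathcal B_2$, the one carrying $D_\alpha$ and $D_\gamma$. A perfect-shuffle permutation of that block, interleaving the $j$th row and column of each of its four $p_1\times p_1$ sub-blocks, turns it into the direct sum $\bigoplus_{j=1}^{p_1}P_j$, where $P_j$ is exactly the $4\times4$ pencil of Lemma~\ref{thm:qsvdfour} with parameters $(\alpha_j,\gamma_j)$. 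For each $j$, Lemma~\ref{thm:qsvdfour} followed by Lemma~\ref{thm:svdfour}, or equally a short direct solve of $P_jw=0$ at the four values of $\lambda$, shows that the eigenvectors of $P_j$ belonging to $\sqrt{\sigma_j}$, $-\sqrt{\sigma_j}$, $i\sqrt{\sigma_j}$, $-i\sqrt{\sigma_j}$ are, up to nonzero scalars, $(1,\gamma_j^{-1},\sqrt{\sigma_j},\sqrt{\sigma_j})^\top$, $(-1,-\gamma_j^{-1},\sqrt{\sigma_j},\sqrt{\sigma_j})^\top$, $(-i,i\gamma_j^{-1},\sqrt{\sigma_j},-\sqrt{\sigma_j})^\top$, and $(i,-i\gamma_j^{-1},\sqrt{\sigma_j},-\sqrt{\sigma_j})^\top$.

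Next I would transport these four vectors back through the permutations and $\mathcal X_0$. Undoing the shuffle and $\mathcal X_1$ sends the four coordinate slots of $P_j$ to the $j$th column of, respectively, the first, second, third, and fourth block columns of the $\diag(U,Y,U,V)$ partition (which carry $U$, $Y$, $U$, $V$), where in the second and fourth one uses the columns of $Y$ and $V$ matched with $D_\alpha$ and $D_\gamma$ in Theorem~\ref{thm:qsvd}. Applying $\mathcal X_0$ then replaces those slots by $u_j$, $y_j$, $u_j$, $v_j$, the vectors attached to the $j$th nontrivial pair $(\alpha_j,\gamma_j)$. Substituting gives exactly the four vectors in the statement; for instance the $\sqrt{\sigma_j}$-eigenvector is $(u_j,\ \gamma_j^{-1}y_j,\ \sqrt{\sigma_j}u_j,\ \sqrt{\sigma_j}v_j)^\top$. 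This mirrors verbatim how Corollary~\ref{thm:svdvecs} follows from Theorem~\ref{thm:sqfreeSVD}, the only new ingredient being the diagonal factor $\gamma_j^{-1}$ produced by Lemma~\ref{thm:qsvdfour}.

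The step I expect to take the most care is precisely this transport: tracking, through $\pi_{\mathcal X}$, which of the four $P_j$-slots lands in which of the four major blocks of the $2p+q+n$ partition, and which sub-block of $Y$ and of $V$ supplies $y_j$ and $v_j$. Everything else is the matrix arithmetic already done in Lemmas~\ref{thm:svdfour} and \ref{thm:qsvdfour}. As an independent check one can substitute the four claimed vectors directly into $\mathcal A-\lambda\mathcal B$: using $Ay_j=\alpha_ju_j$ and $Cy_j=\gamma_jv_j$, the identities $A^\ast u_j=\alpha_jY^{-\ast}\hat e$ and $C^\ast v_j=\gamma_jY^{-\ast}\hat e$ for a common unit vector $\hat e$, and $\alpha_j=\sigma_j\gamma_j$, each block row of $(\mathcal A-\lambda\mathcal B)z$ collapses to zero at the stated eigenvalue.
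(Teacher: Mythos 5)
Your proposal is correct and takes essentially the route the paper intends: the corollary is left as a by-product of the proof of Theorem~\ref{thm:sqfreeQSVD}, and you recover it by transporting the eigenvectors of the $4\times 4$ blocks (Lemmas~\ref{thm:svdfour} and~\ref{thm:qsvdfour}) back through $\mathcal X_1$ and $\mathcal X_0=\diag(U,Y,U,V)$, with a valid direct-substitution check as backup. One small remark: the columns you actually use, $y_j = Y e_{q_1+q_2+j}$ and $v_j = V e_{n_1+j}$ (those matched with $D_\alpha$ and $D_\gamma$ in Theorem~\ref{thm:qsvd}), are the right ones; the indices $Y e_{q_1+j}$ and $V e_j$ printed in the corollary look like a slip, consistent with the indexing in Corollary~\ref{thm:rsvdvecs}.
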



Compared to the OSVD, the KCF for \eqref{eq:sqfreeQSVD} has three extra
blocks, nameley the first three blocks in the list of
Theorem~\ref{thm:sqfreeQSVD}. The first of these blocks is associated
with the singularity of the pencil, while the second and third blocks
are associated with infinite eigenvalues/singular values.



\section{The restricted singular value decomposition}\label{sec:rsvd}

The RSVD is useful for, for example, analyzing structured rank
perturbations, computing low-rank approximations of partitioned
matrices, minimization or maximization of the bilinear form $x^*\!  A y$
under the constraints $\|B^* x\|, \|C y\| \neq 0$, solving matrix
equations of the form $BXC = A$, constrained total least squares with
exact rows and columns, and generalized Gauss--Markov models with
constraints. See, e.g., Zha~\cite{Zha91} and De~Moor and
Golub~\cite{MG91} for more information. The RSVD is defined by the
theorem below and comes from \cite{Zwaa19}, but was adapted from the
preceeding two references.  The full definition of the RSVD is quite
tedious, but we can follow the same general approach as in the previous
two sections.


\newlength{\DIndexWidth}
\settowidth{\DIndexWidth}{\hbox{$D_\alpha$}}
\newlength{\ZeroWidth}
\settowidth{\ZeroWidth}{\hbox{$0$}}

\begin{theorem}[The restricted singular value
  decomposition]\label{thm:rsvd}
  Let $A \in \mathbb C^{p \times q}$, $B \in \mathbb C^{p \times m}$,
  and $C \in \mathbb C^{n \times q}$, and define $r_{A} = \rank A$,
  $r_{B} = \rank B$, $r_{C} = \rank C$, $r_{AB} = \rank \big[ A\; B
  \big]$, $r_{AC} = \rank \big[ A;\; C \big]$, and $r_{ABC} = \rank
  \minimat{A}{B}{C}{0}$. Then the triplet of matrices $(A,B,C)$ can be
  factorized as $A = X^{-*} \Sigma_\alpha Y^{-1}$, $B = X^{-*}
  \Sigma_\beta U^*$, and $C = V \Sigma_\gamma Y^{-1}$, where $X \in
  \mathbb C^{p\times p}$ and $Y \in \mathbb C^{q\times q}$ are
  nonsingular, and $U \in \mathbb C^{m\times m}$ and $V \in \mathbb
  C^{n\times n}$ are orthonormal.  Furthermore, $\Sigma_\alpha$,
  $\Sigma_\beta$, and $\Sigma_\gamma$ have nonnegative entries and are
  such that
  {\scriptsize $\arraycolsep=0.25\arraycolsep
    \def\arraystretch{1}
    \left[\begin{array}{c|c}
      \Sigma_\alpha & \Sigma_\beta \\ \hline \Sigma_\gamma & 0
    \end{array}\right]$}
  can be written as
  \begin{equation}\label{eq:sigmas}
    \begin{array}{cccccccccccc}
      &
      \multicolumn{10}{c}{\hspace{-2.0\arraycolsep}
        \begin{array}{cccccccccc}
          \text{\makebox[\ZeroWidth]{\small $q_1$}} &
          \text{\makebox[\ZeroWidth]{\small $q_2$}} &
          \text{\makebox[\DIndexWidth]{\small $q_3$}} &
          \text{\makebox[\ZeroWidth]{\small $q_4$}} &
          \text{\makebox[\ZeroWidth]{\small $q_5$}} &
          \text{\makebox[\ZeroWidth]{\small $q_6$}} &
          \text{\makebox[\DIndexWidth]{\small $m_1$}} &
          \text{\makebox[\ZeroWidth]{\small $m_2$}} &
          \text{\makebox[\ZeroWidth]{\small $m_3$}} &
          \text{\makebox[\ZeroWidth]{\small $m_4$}}
        \end{array}
      }
      \\
      \begin{array}{c}
        \text{\small $p_1$} \\
        \text{\small $p_2$} \\
        \text{\small $p_3$} \\
        \text{\small $p_4$} \\
        \text{\small $p_5$} \\
        \text{\small $p_6$} \\
        \text{\small $n_1$} \\
        \text{\small $n_2$} \\
        \text{\small $n_3$} \\
        \text{\small $n_4$}
      \end{array}
      &
      \multicolumn{10}{c}{\hspace{-2.0\arraycolsep}\left[
        \begin{array}{cc:cccc|ccc:c}
          0 & 0 & D_\alpha & 0 & 0 & 0 & D_\beta & 0 & 0 & 0 \\
          0 & 0 & 0        & I & 0 & 0 &       0 & I & 0 & 0 \\
          0 & 0 & 0        & 0 & I & 0 &       0 & 0 & 0 & 0 \\
          0 & 0 & 0        & 0 & 0 & I &       0 & 0 & 0 & 0 \\
          \hdashline
          0 & 0 & 0        & 0 & 0 & 0 &       0 & 0 & 0 & I \\
          0 & 0 & 0        & 0 & 0 & 0 &       0 & 0 & 0 & 0 \\
          \hline
          0 & I & 0        & 0 & 0 & 0 \\\cdashline{1-6}
          0 & 0 & D_\gamma & 0 & 0 & 0 \\
          0 & 0 & 0        & 0 & I & 0 \\
          0 & 0 & 0        & 0 & 0 & 0 \\
        \end{array}
      \right]}
      &\hspace{-2.0\arraycolsep}
      \begin{array}{l}
        \text{\small $p_1 = q_3 = r_{ABC} + r_{A} - r_{AB} - r_{AC}$} \\
        \text{\small $p_2 = q_4 = r_{AC} + r_{B} - r_{ABC}$} \\
        \text{\small $p_3 = q_5 = r_{AB} + r_{C} - r_{ABC}$} \\
        \text{\small $p_4 = q_6 = r_{ABC} - r_{B} - r_{C}$} \\
        \text{\small $p_5 = r_{AB} - r_{A}$, $q_2 = r_{AC} - r_{A}$} \\
        \text{\small $p_6 = p - r_{AB}$, $q_1 = q - r_{AC}$} \\
        \text{\small $n_1 = q_2$, $m_4 = p_5$} \\
        \text{\small $n_2 = m_1 = p_1 = q_3$} \\
        \text{\small $n_3 = p_3 = q_5$, $m_2 = p_2 = q_4$} \\
        \text{\small $n_4 = n - r_{C}$, $m_3 = m - r_{B}$},
      \end{array}
    \end{array}
  \end{equation}
  where $D_\alpha = \diag(\alpha_1, \dots, \alpha_{p_1})$, $D_\beta =
  \diag(\beta_1, \dots, \beta_{p_1})$, and $D_\gamma = \diag(\gamma_1,
  \dots, \gamma_{p_1})$. Moreover, $\alpha_j$, $\beta_j$, and $\gamma_j$
  are scaled such that $\alpha_j^2 + \beta_j^2 \gamma_j^2 = 1$ for $i =
  1$, \dots, $p_1$. Besides the $p_1$ triplets $(\alpha_j,
  \beta_j, \gamma_j)$, there are $p_2$ triplets $(1,1,0)$, $p_3$
  triplets $(1,0,1)$, $p_4$ triplets $(1,0,0)$, and $\min \{ p_5, q_2
  \}$ triplets $(0,1,1)$.  This leads to a total of $p_1 + p_2 + p_3 +
  p_4 + \min \{ p_5, q_2 \} = r_A + \min \{ p_5, q_2 \} = \min \{
  r_{AB}, r_{AC} \}$ regular triplets of the form $(\alpha, \beta,
  \gamma)$ with $\alpha^2 + \beta^2 \gamma^2 = 1$. Each of these
  triplets corresponds to a \emph{restricted singular value} $\sigma =
  \alpha / (\beta \gamma)$, where the result is $\infty$ by convention
  if $\alpha \neq 0$ and $\beta \gamma = 0$.
  Finally, the triplet has a right (or column) trivial block of
  dimension $q_1 = \dim(\mathcal N(A) \cap \mathcal N(C))$, and
  a left (or row) trivial block of dimension $p_6 = \dim(\mathcal N(A^*)
  \cap \mathcal N(B^*))$.
  %
\end{theorem}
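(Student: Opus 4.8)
The plan is to construct the factorization explicitly from a finite chain of standard decompositions, using an ordinary SVD whenever a unitary factor ($U$ or $V$) is wanted and a general full-rank (rank-revealing) factorization whenever one of the nonsingular factors ($X$ or $Y$) is wanted. The structural hint is that $X$ enters only as a common left factor of $A$ and $B$, that $Y^{-1}$ enters only as a common right factor of $A$ and $C$, and that $U$ and $V$ touch only the outward side of $B$ and of $C$. Accordingly, one should first spend the SVDs of $B$ and of $C$ to expose $r_B$, $r_C$ and the four subspaces $\mathcal{R}(B)$, $\mathcal{N}(B^{*})$, $\mathcal{R}(C^{*})$, $\mathcal{N}(C)$, and only then use the freedom that remains to reduce $A$.

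Concretely, I would proceed as follows. \emph{(i)} Compute SVDs of $B$ and of $C$; route the unitary factor of $B$ on the $A$ side (its left factor) and the unitary factor of $C$ on the $A$ side (its right factor) into the nascent $X$ and $Y$, and keep the two outer unitary factors (the right factor of $B$ and the left factor of $C$) as the start of $U$ and $V$. After replacing $A$ by the correspondingly transformed matrix and partitioning conformally with $p = r_B + (p - r_B)$ and $q = r_C + (q - r_C)$, the matrix $A$ becomes a $2 \times 2$ block matrix whose $(2,2)$ block corresponds to the left null space $\mathcal{N}(B^{*})$ and the right null space $\mathcal{N}(C)$. \emph{(ii)} Reduce that $(2,2)$ block by its SVD, then use block row operations drawn from $X$ and block column operations drawn from $Y$ to clear the parts of the $(1,2)$ and $(2,1)$ blocks lying in the row, respectively column, space of the reduced $(2,2)$ block. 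This isolates the purely left-trivial block of size $p_6 = \dim(\mathcal{N}(A^{*}) \cap \mathcal{N}(B^{*}))$, the purely right-trivial block of size $q_1 = \dim(\mathcal{N}(A) \cap \mathcal{N}(C))$, the $p_4$ triplets $(1,0,0)$, and it cleanly separates the blocks that feed the $(1,1,0)$ and $(1,0,1)$ triplets. \emph{(iii)} On the remaining ``active'' part --- the one involving the nonsingular singular-value blocks of $B$ and $C$ together with the leftover of $A$ --- apply a QSVD-type reduction in the spirit of Theorem~\ref{thm:qsvd}, applied to a suitably scaled pair, once more routing the new unitary factors into $U$, $V$ and the new nonsingular ones into $X$, $Y$; this yields $D_\alpha$, $D_\beta$, $D_\gamma$ together with the $\min\{p_5, q_2\}$ triplets $(0,1,1)$. \emph{(iv)} Finally, postmultiply $X^{-*}$ by a diagonal matrix and premultiply $Y^{-1}$ by a diagonal matrix --- legitimate because $X$ and $Y$ need not be unitary --- to enforce $\alpha_j^2 + \beta_j^2 \gamma_j^2 = 1$, and apply a global block permutation (also absorbed into $X$, $Y$, $U$, $V$) to arrange the blocks in the order displayed in \eqref{eq:sigmas}.

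The routine part is checking that every transformation above is of the permitted type and that the accumulated pieces compose into a single nonsingular $X \in \mathbb{C}^{p \times p}$, a single nonsingular $Y \in \mathbb{C}^{q \times q}$, and single unitary $U$ and $V$ of the stated sizes. The genuinely delicate part, and what I expect to be the main obstacle, is the dimension bookkeeping: one must verify that the blocks produced have exactly the sizes prescribed in \eqref{eq:sigmas}. This comes down to carefully tracking how the six ranks $r_A$, $r_B$, $r_C$, $r_{AB}$, $r_{AC}$, $r_{ABC}$ change under steps \emph{(i)}--\emph{(iii)} --- the unitary factors leave them invariant, while the nonsingular factors only reorganize rank structure that the SVDs have already exposed --- and then matching the resulting block counts against the identities in the right-hand column of \eqref{eq:sigmas}, such as $p_1 = q_3 = r_{ABC} + r_A - r_{AB} - r_{AC}$ and $p_2 = q_4 = r_{AC} + r_B - r_{ABC}$. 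An alternative that avoids part of this bookkeeping is to derive the RSVD from the GSVD as in De~Moor and Golub~\cite{MG91}, or simply to invoke the construction already carried out in \cite{Zwaa19}; the outline above is the self-contained route.
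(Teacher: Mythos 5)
First, a point of comparison: the paper does not prove Theorem~\ref{thm:rsvd} at all --- it imports the statement from \cite{Zwaa19}, adapted from \cite{Zha91} and \cite{MG91} --- so the fallback in your last sentence (invoke those constructions) is in fact what the paper does. Your self-contained outline therefore has to be measured against the constructive proofs in those references, and in broad strokes it follows the same route: spend the SVDs of $B$ and $C$ first, reduce the transformed $A$ with respect to the four subspaces $\mathcal R(B)$, $\mathcal N(B^*)$, $\mathcal R(C^*)$, $\mathcal N(C)$, treat the coupled core separately, and finish with scalings and a block permutation absorbed into $X$, $Y$, $U$, $V$.

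As written, though, the outline has gaps precisely at the places where the argument lives. (a) In step (ii) you clear blocks of $A$ with ``row operations drawn from $X$ and column operations drawn from $Y$,'' but every left operation also acts on $B$ and every right operation also acts on $C$; the elimination is only admissible in one direction --- you may add multiples of rows on which $B$ is zero to the remaining rows, and multiples of columns on which $C$ is zero to the remaining columns, never the reverse --- and after step (i) any further left transformation of the rows carrying the nonzero part of $B$ must be of the form (diagonal)$\times$(unitary) if it is to be repaired by the unitary $U$ alone (similarly on the $C$ side). Stating and respecting this admissibility restriction is the actual content of the step; ignoring it would destroy the already-achieved $\Sigma_\beta$ and $\Sigma_\gamma$. (b) Because of that restriction, step (iii) is not literally a QSVD of a pair in the sense of Theorem~\ref{thm:qsvd}: once the invertible parts of $B$ and $C$ are absorbed into $X$ and $Y$, the admissible two-sided transformations on the core are unitary up to diagonal scaling, so the core reduces to an ordinary SVD of a single balanced matrix, and you must exhibit the diagonal scalings that turn its singular values $s_j$ into triplets satisfying $\alpha_j^2+\beta_j^2\gamma_j^2=1$ (e.g.\ $\alpha_j=s_j(1+s_j^2)^{-1/2}$, $\beta_j=\gamma_j=(1+s_j^2)^{-1/4}$); the $\min\{p_5,q_2\}$ triplets $(0,1,1)$ are then a counting convention attached to the rank-deficient part of that core rather than an output of a QSVD. (c) The dimension identities in \eqref{eq:sigmas} --- $p_1=r_{ABC}+r_A-r_{AB}-r_{AC}$, $p_2=r_{AC}+r_B-r_{ABC}$, and so on --- together with $q_1=\dim(\mathcal N(A)\cap\mathcal N(C))$ and $p_6=\dim(\mathcal N(A^*)\cap\mathcal N(B^*))$, are flagged but not derived, and without them the theorem as stated is not established. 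All of this is fixable, but it is exactly the work carried out in \cite{Zha91} and \cite{MG91}, so either complete those steps or cite the references as the paper itself does.
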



As before, we first consider the one dimensional case and generalize
Lemmas~\ref{thm:svdfour} and~\ref{thm:qsvdfour} to the RSVD.


\begin{lemma}\label{thm:rsvdfour}
  Suppose $\alpha$, $\beta$, and $\gamma$ are positive real numbers and
  consider the pencil
  \begin{equation*}
    \mathcal A - \lambda \mathcal B =
    \begin{bmatrix}
      0 & \alpha & 0 & 0 \\
      \alpha & 0 & 0 & 0 \\
      0 & 0 & 1 & 0 \\
      0 & 0 & 0 & 1
    \end{bmatrix}
    {} - \lambda
    \begin{bmatrix}
      0 & 0 & \beta & 0 \\
      0 & 0 & 0 & \gamma \\
      \beta & 0 & 0 & 0 \\
      0 & \gamma & 0 & 0
    \end{bmatrix}.
  \end{equation*}
  Then the nonsingular matrices
  \begin{equation*}
    \mathcal X = \mathcal Y = \sigma^{-1/4} \diag(
      \beta^{-1}, \gamma^{-1}, \sqrt\sigma, \sqrt\sigma),
  \end{equation*}
  where $\sigma = \alpha / (\beta\gamma)$,
  are such that
  $\mathcal Y^* (\mathcal A - \lambda \mathcal B) \mathcal X$ is a
  pencil of the form \eqref{eq:svdfour}.
\end{lemma}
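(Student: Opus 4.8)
The plan is to prove this exactly as Lemmas~\ref{thm:svdfour} and~\ref{thm:qsvdfour}: by a direct verification of the indicated $4\times 4$ diagonal congruence. First I would note that, since $\alpha,\beta,\gamma>0$, the number $\sigma=\alpha/(\beta\gamma)$ is positive, so $D:=\mathcal X=\mathcal Y=\sigma^{-1/4}\diag(\beta^{-1},\gamma^{-1},\sqrt\sigma,\sqrt\sigma)$ is a real, nonsingular, diagonal matrix; in particular $\mathcal Y^*=D$, so $\mathcal Y^*(\mathcal A-\lambda\mathcal B)\mathcal X=D\mathcal AD-\lambda\,D\mathcal BD$, and everything reduces to computing the two entrywise-rescaled matrices $D\mathcal AD$ and $D\mathcal BD$.

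Then I would simply read off those rescalings. In $D\mathcal AD$ the two $\alpha$'s in positions $(1,2),(2,1)$ get multiplied by $(\sigma^{-1/4}\beta^{-1})(\sigma^{-1/4}\gamma^{-1})=\sigma^{-1/2}(\beta\gamma)^{-1}$, so they become $\sigma^{-1/2}\cdot\alpha/(\beta\gamma)=\sigma^{-1/2}\sigma=\sqrt\sigma$ (this is the one place the definition of $\sigma$ is used), while the two trailing $1$'s in positions $(3,3),(4,4)$ get multiplied by $(\sigma^{-1/4}\sqrt\sigma)^2=(\sigma^{1/4})^2=\sqrt\sigma$; hence $D\mathcal AD=\sqrt\sigma\,(\minimat{0}{1}{1}{0}\oplus I)$. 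In $D\mathcal BD$ each $\beta$ in positions $(1,3),(3,1)$ gets multiplied by $(\sigma^{-1/4}\beta^{-1})(\sigma^{-1/4}\sqrt\sigma)=\beta^{-1}\cdot\sigma^{-1/4}\sigma^{1/4}=\beta^{-1}$, and each $\gamma$ in positions $(2,4),(4,2)$ likewise by $\gamma^{-1}$, so all four become $1$ and $D\mathcal BD=\minimat{0}{I}{I}{0}$. Therefore $\mathcal Y^*(\mathcal A-\lambda\mathcal B)\mathcal X=\sqrt\sigma\,(\minimat{0}{1}{1}{0}\oplus I)-\lambda\minimat{0}{I}{I}{0}$, which is a pencil of the form \eqref{eq:svdfour} with the positive scalar there equal to $\sqrt\sigma=\sqrt{\alpha/(\beta\gamma)}$; its eigenvalues and KCF are then those given by Lemma~\ref{thm:svdfour}, namely $\pm\sqrt\sigma$, $\pm i\sqrt\sigma$ and four $1\times1$ Jordan blocks, with $\sigma=\alpha/(\beta\gamma)$.

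I do not expect any real obstacle: the argument is entirely mechanical, which is exactly why — as with the two earlier lemmas — it can be dispatched ``by direct verification.'' The only place a slip could creep in is the bookkeeping of the fractional powers of $\sigma$ (net exponent $-1/4-1/4+1=1/2$ on the off-diagonal block of $D\mathcal AD$, $1/4+1/4=1/2$ on its trailing block, and $-1/4+1/4=0$ throughout $D\mathcal BD$) together with the observation that each $\beta$ and each $\gamma$ is cancelled by its matching reciprocal. Conceptually, the content lies not in checking $D$ but in choosing it: the prefactor $\sigma^{-1/4}$ and the two $\sqrt\sigma$ entries are precisely calibrated so that a single symmetric congruence ($\mathcal X=\mathcal Y$) both clears $\beta$ and $\gamma$ out of $\mathcal B$ and rescales $\mathcal A$ onto the canonical pencil \eqref{eq:svdfour} associated with the restricted singular value $\sigma=\alpha/(\beta\gamma)$.
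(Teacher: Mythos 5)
Your rescaling arithmetic is correct, and it is exactly the ``direct verification'' the paper leaves to the reader; the problem is the final identification, which does not follow from your computation and in fact contradicts it. You computed $D\mathcal AD=\sqrt\sigma\,\bigl(\minimat{0}{1}{1}{0}\oplus I\bigr)$ and $D\mathcal BD=\minimat{0}{I}{I}{0}$, so the trailing diagonal entries of the transformed $\mathcal A$-part are $\sqrt\sigma$, not $1$. A pencil of the form \eqref{eq:svdfour} with positive parameter $s$ has $s$ in positions $(1,2),(2,1)$ and $1$ in positions $(3,3),(4,4)$; your matrix matches this for no choice of $s$ unless $\sigma=1$. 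Hence the sentence ``a pencil of the form \eqref{eq:svdfour} with the positive scalar there equal to $\sqrt\sigma$'' is false as written, and the inconsistency propagates: if the transformed pencil really were \eqref{eq:svdfour} with parameter $\sqrt\sigma$, Lemma~\ref{thm:svdfour} would give eigenvalues $\pm\sigma^{1/4},\pm i\sigma^{1/4}$, not the $\pm\sqrt\sigma,\pm i\sqrt\sigma$ you then assert. (Your asserted eigenvalues are correct for the pencil you actually obtained, since it equals $\sqrt\sigma$ times the $\sigma=1$ instance of the $\mathcal A$-part minus $\lambda$ times the $\mathcal B$-part of \eqref{eq:svdfour}; but that is not the argument you gave.)

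There are two clean ways to repair the step, and you should also note that part of the tension is inherited from the statement itself. Either take $\mathcal X=\mathcal Y=\diag(\beta^{-1},\gamma^{-1},1,1)$, which really does carry the pencil onto \eqref{eq:svdfour} with parameter $\sigma=\alpha/(\beta\gamma)$ --- though this is not the scaling in the lemma, and it changes the eigenvector bookkeeping that feeds Corollary~\ref{thm:rsvdvecs}; or keep the stated scaling, record the outcome as $\sqrt\sigma\bigl(\minimat{0}{1}{1}{0}\oplus I\bigr)-\lambda\minimat{0}{I}{I}{0}$, and observe that the $\sigma$-independent matrices $\mathcal X,\mathcal Y$ of Lemma~\ref{thm:svdfour} reduce precisely this pencil to $\diag(\sqrt\sigma,-\sqrt\sigma,i\sqrt\sigma,-i\sqrt\sigma)-\lambda I$ (they satisfy $\mathcal Y^*\,\minimat{0}{I}{I}{0}\,\mathcal X=I$ and diagonalize the $\sigma=1$ $\mathcal A$-part), which is all that the proofs of Theorem~\ref{thm:sqfreeRSVD} and Corollary~\ref{thm:rsvdvecs} actually use. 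Be aware that the literal pencil \eqref{eq:svdfour} with $\sigma\neq1$ has $\sigma$-dependent eigenvectors (e.g.\ $(1,1,\sqrt\sigma,\sqrt\sigma)$ for the eigenvalue $\sqrt\sigma$, as in Corollary~\ref{thm:svdvecs}), so it cannot be diagonalized by the fixed $\mathcal X,\mathcal Y$ displayed in Lemma~\ref{thm:svdfour}; a careful direct verification should surface this mismatch between the stated scaling and the phrase ``of the form \eqref{eq:svdfour}'' rather than paper over it, since identifying which $4\times4$ pencil one has actually produced is the entire content of the lemma.
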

\begin{proof}
  The proof is by direct verification.
\end{proof}


With the definition of the RSVD in Theorem~\ref{thm:rsvd}, and the
reduction in Lemma~\ref{thm:rsvdfour}, we can state and prove the
following theorem and corollary.


\begin{theorem}\label{thm:sqfreeRSVD}
  Let $A$, $B$, $C$, and their corresponding RSVD be as in
  Theorem~\ref{thm:rsvd}. Then the KCF of the pencil
  \begin{equation}\label{eq:sqfreeRSVD}
    \mathcal A - \lambda \mathcal B =
    \begin{bmatrix}
      0 & A & 0 & 0 \\
      A^* & 0 & 0 & 0 \\
      0 & 0 & I & 0 \\
      0 & 0 & 0 & I
    \end{bmatrix}
    {} - \lambda
    \begin{bmatrix}
      0 & 0 & B & 0 \\
      0 & 0 & 0 & C^* \\
      B^* & 0 & 0 & 0 \\
      0 & C & 0 & 0
    \end{bmatrix}
  \end{equation}
  consists of the following blocks.
  \begin{enumerate}
    \item A $(p_6 + q_1) \times (p_6 + q_1)$ zero block, which
      correspond to restricted singular triplets of the form $(0,0,0)$.
    \item A series of $p_4 + q_6 + m_3 + n_4$ blocks $N_1$, which
      correspond to $(1,0,0)$ triplets.
    \item A series of $p_2$ blocks $N_3$, which correspond to
      $(1,1,0)$ triplets.
    \item A series of $p_3$ blocks $N_3$, which correspond to
      $(1,0,1)$ triplets.
    \item A series of $p_5 + q_2 = m_4 + n_1$ blocks $J_2(0)$, which
      correspond to $(0,1,1)$ triplets.
    \item The blocks
      $J_1(\sqrt{\sigma_1})$, \dots, $J_1(\sqrt{\sigma_{p_1}})$, 
      $J_1(-\sqrt{\sigma_1})$, \dots, $J_1(-\sqrt{\sigma_{p_1}})$,
      $J_1(i\sqrt{\sigma_1})$, \dots, $J_1(i\sqrt{\sigma_{p_1}})$,
      $J_1(-i\sqrt{\sigma_1})$, \dots, $J_1(-i\sqrt{\sigma_{p_1}})$,
      where $\sigma_1$, \dots, $\sigma_{p_1}$ are the finite and nonzero
      restricted singular values of the matrix triplet $(A, B, C)$.
  \end{enumerate}
\end{theorem}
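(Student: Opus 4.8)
The plan is to follow the exact same three-stage template used for the OSVD and QSVD cases in Theorems~\ref{thm:sqfreeSVD} and~\ref{thm:sqfreeQSVD}. First I would apply the ``obvious'' block-diagonal congruence coming directly from the RSVD of Theorem~\ref{thm:rsvd}. Since $A = X^{-*}\Sigma_\alpha Y^{-1}$, $B = X^{-*}\Sigma_\beta U^*$, and $C = V\Sigma_\gamma Y^{-1}$, setting $\mathcal X_0 = \mathcal Y_0 = \diag(X, Y, X, V)$ and forming $\mathcal A_1 - \lambda\mathcal B_1 = \mathcal Y_0^*(\mathcal A - \lambda\mathcal B)\mathcal X_0$ replaces $A$, $A^*$, $B$, $B^*$, $C$, $C^*$ in \eqref{eq:sqfreeRSVD} by $\Sigma_\alpha$, $\Sigma_\alpha^*$, $\Sigma_\beta$, $\Sigma_\beta^*$, $\Sigma_\gamma$, $\Sigma_\gamma^*$ respectively. (One should check that the $U^*$ in $B = X^{-*}\Sigma_\beta U^*$ is absorbed correctly; because $U$ only multiplies the fourth block-column of $\mathcal B$ on one side, choosing $\mathcal X_0$'s fourth block to be $V$ and leaving $U$ implicit in the bookkeeping, or equivalently taking $\mathcal X_0 = \diag(X,Y,X,V)$ and $\mathcal Y_0 = \diag(X,Y,X,V)$ together with the congruence $\mathcal Y_0^*\mathcal B\mathcal X_0$, yields the patterned $\Sigma$ block of \eqref{eq:sigmas}.) The pencil $\mathcal A_1 - \lambda\mathcal B_1$ is then a square block matrix whose total dimension is $2p + q + n = (p_1+\dots+p_6) + (q_1+\dots+q_6) + (p_1+\dots+p_6) + (n_1+\dots+n_4)$, entirely assembled from identity blocks, zero blocks, and the three diagonal matrices $D_\alpha$, $D_\beta$, $D_\gamma$.

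The second stage is the permutation step: I would exhibit explicit block permutations $\mathcal X_1$, $\mathcal Y_1$ (analogous to the $\pi_{\mathcal X}$, $\pi_{\mathcal Y}$ of the QSVD proof, but now indexing a much larger list of blocks) that transform $\mathcal A_1 - \lambda\mathcal B_1$ into a block-diagonal pencil $\mathcal A_2 - \lambda\mathcal B_2$ whose diagonal blocks are precisely the six groups claimed. Each group should fall out by matching the row/column index identities in \eqref{eq:sigmas}: the $(p_6+q_1)$-dimensional pure-zero block uses $p_6$ row-indices and $q_1$ column-indices that never meet a nonzero entry in either $\mathcal A_2$ or $\mathcal B_2$; the $N_1$ blocks come from $1\times 1$ pieces with a nonzero $\mathcal A$-entry and zero $\mathcal B$-entry (the $p_4$ triplets $(1,0,0)$, the $q_6$, $m_3$, $n_4$ directions where only an identity in $\mathcal A$ survives, using $m_3 = m - r_B$, $n_4 = n - r_C$); the $N_3$ blocks come from the $3\times 3$ pattern $\minimat{I}{0}{0}{I}$-style pencil seen in the QSVD proof, now arising from the coupling of a $(1,1,0)$ triplet (size $p_2 = q_4 = m_2$) or a $(1,0,1)$ triplet (size $p_3 = q_5 = n_3$); the $J_2(0)$ blocks come from the $(0,1,1)$ directions exactly as in the QSVD case, using $p_5 = m_4$ and $q_2 = n_1$ (with $\min\{p_5,q_2\}$ of them genuinely $(0,1,1)$ and the excess folding into the zero/$N_1$ groups); and finally the regular triplets $(\alpha_j,\beta_j,\gamma_j)$ produce, after one more permutation grouping their four associated indices, a direct sum of $p_1$ copies of the $4\times 4$ pencil of Lemma~\ref{thm:rsvdfour}, each of which reduces by that lemma and Lemma~\ref{thm:svdfour} to $\diag(\sqrt{\sigma_j}, -\sqrt{\sigma_j}, i\sqrt{\sigma_j}, -i\sqrt{\sigma_j}) - \lambda I$.

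The third stage is simply to invoke Lemma~\ref{thm:rsvdfour} (composed with Lemma~\ref{thm:svdfour}) on each of those $4\times 4$ diagonal blocks, and to note that the remaining blocks are already in KCF (up to the harmless permutations that reorder rows and columns within a block into the canonical $N_1$, $N_3$, $J_2(0)$ shapes). Collecting all the blocks and re-permuting into canonical order gives exactly the six-item list. Since strict equivalence is preserved under all these congruences and permutations, and the KCF is a complete invariant of strict equivalence \cite[Thm.~5]{Gant2}, the stated list is the KCF of \eqref{eq:sqfreeRSVD}.

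The main obstacle I anticipate is purely combinatorial bookkeeping rather than any conceptual difficulty: with ten row-partition indices ($p_1,\dots,p_6,n_1,\dots,n_4$) and ten column-partition indices ($q_1,\dots,q_6,m_1,\dots,m_4$) appearing in four copies inside \eqref{eq:sqfreeRSVD}, one must carefully track which $\mathcal A$- and $\mathcal B$-entries couple which index groups, write down the (long) permutations $\pi_{\mathcal X}$ and $\pi_{\mathcal Y}$ explicitly, and verify that the dimension identities in \eqref{eq:sigmas} (e.g. $p_2 = q_4 = m_2$, $p_3 = q_5 = n_3$, $p_5 = m_4$, $q_2 = n_1$, $m_3 = m - r_B$, $n_4 = n - r_C$) make every claimed block square and of the asserted size, and that the multiplicities add up. The one subtlety worth isolating is the asymmetry introduced by $B = X^{-*}\Sigma_\beta U^*$ versus $C = V\Sigma_\gamma Y^{-1}$: the orthonormal factor $U$ attached to $B$ on the ``wrong'' side must be handled so that it does not obstruct the block-diagonalization; this is exactly the role played by the $m$-indexed column partition $m_1,\dots,m_4$ in \eqref{eq:sigmas}, and the resolution is the same as in the QSVD proof where $C = V\Sigma_\gamma Y^{-1}$ is absorbed via the fourth block of $\mathcal X_0$. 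Once that is set up, everything else is a routine (if lengthy) verification mirroring the QSVD argument.
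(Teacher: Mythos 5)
Your overall strategy is the paper's own: transform with a block-diagonal equivalence built from the RSVD factors, permute to block-diagonal form, and finish with Lemma~\ref{thm:rsvdfour}. But the one step where care was needed --- the first-stage transformation --- is wrong as you state it. You propose $\mathcal X_0 = \mathcal Y_0 = \diag(X, Y, X, V)$ and suggest the unitary factor $U$ in $B = X^{-*}\Sigma_\beta U^*$ can be ``left implicit.'' It cannot: $U$ is precisely what must occupy the third diagonal slot. In the pencil \eqref{eq:sqfreeRSVD} the third block-row and block-column have dimension $m$ (they carry $B \in \mathbb{C}^{p\times m}$, $B^*$, and an $m\times m$ identity), so the third block of the transformation must be $m\times m$; your $X$ is $p\times p$ and does not even conform unless $m=p$. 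Moreover, even when $m=p$, your choice gives $X^* B X = \Sigma_\beta U^* X$ in the $(1,3)$ position of $\mathcal B$ (not the diagonal $\Sigma_\beta$) and $X^* X \neq I$ in the $(3,3)$ position of $\mathcal A$, so the resulting pencil is not assembled from identity, zero, and diagonal blocks, and the entire second-stage permutation argument collapses. The correct choice, used in the paper, is $\mathcal X_0 = \mathcal Y_0 = \diag(X, Y, U, V)$: then $X^*AY = \Sigma_\alpha$, $X^*BU = \Sigma_\beta$, $V^*CY = \Sigma_\gamma$, and the identities in the $(3,3)$ and $(4,4)$ positions survive because $U$ and $V$ are unitary. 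The same confusion shows up in your dimension count: the transformed pencil has total size $p + q + m + n$, i.e.\ $(p_1+\dots+p_6) + (q_1+\dots+q_6) + (m_1+\dots+m_4) + (n_1+\dots+n_4)$ (a $20\times 20$ block matrix), not $2p+q+n$.

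Two smaller points. First, your parenthetical that only $\min\{p_5,q_2\}$ of the $J_2(0)$ blocks are ``genuine'' with ``the excess folding into the zero/$N_1$ groups'' is not what happens in the KCF: all $p_5 + q_2 = m_4 + n_1$ blocks $J_2(0)$ appear (the $\min\{p_5,q_2\}$ only counts regular $(0,1,1)$ triplets in Theorem~\ref{thm:rsvd}, and the leftover directions still produce $J_2(0)$ blocks, exactly as in item~5 of the statement). Second, once the transformation is fixed to $\diag(X,Y,U,V)$, the rest of your outline (explicit permutations producing the zero block, $N_1\otimes I$, two $N_3\otimes I$ groups of sizes governed by $p_2=q_4=m_2$ and $p_3=q_5=n_3$, the $J_2(0)$ group, and $p_1$ copies of the $4\times4$ pencil handled by Lemma~\ref{thm:rsvdfour}) matches the paper's proof and would go through as routine bookkeeping.
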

\begin{proof}
  Let $\mathcal A_{0} - \lambda \mathcal B_{0} = \mathcal A -
  \lambda \mathcal B$, and define the transformations $\mathcal X_{0} =
  \mathcal Y_0 = \operatorname{diag}(X, Y, U, V)$.  Then the pencil
  $\mathcal A_1 - \lambda \mathcal B_1 = \mathcal Y_0^* (\mathcal A_0 -
  \lambda \mathcal B_0) \mathcal X_0$ is a square $20 \times 20$ block
  matrix of dimension
  \begin{equation*}
    \underbrace{p_1 + \dots + p_6}_{p}
    {} + \underbrace{q_1 + \dots + q_6}_{q}
    {} + \underbrace{m_1 + \dots +m_4}_{m}
    {} + \underbrace{n_1 + \dots + n_4}_{n}.
  \end{equation*}
  Now let $\mathcal X_1$ and $\mathcal Y_1$ be permutation matrices
  corresponding to the permutations
  \begin{equation*}
    \begin{split}
      \pi_{\mathcal X} &= (
        6, 7,\quad 
        12, 4, 15, 20, \quad 
        10, 14, 2, \quad 
        3, 19, 11, \quad 
        5, 16, 8, 17, \quad 
        1, 9, 13, 18 
      ), \\
      \pi_{\mathcal Y} &= (
        6, 7, \quad 
        4, 12, 15, 20, \quad 
        2, 14, 10, \quad 
        11, 19, 3, \quad 
        16, 5, 17, 8, \quad 
        1, 9, 13, 18 
      ),
    \end{split}
  \end{equation*}
  respectively.  Then the pencil $\mathcal A_2 - \lambda \mathcal B_2 =
  \mathcal Y_1^* (\mathcal A_1 - \lambda \mathcal B_1) \mathcal X_1$ is
  block diagonal, and has the following blocks along its diagonal.
  \begin{enumerate}
    \item A $(p_6 + q_1) \times (p_6 + q_1)$ block of zeros.
    \item The $(p_4 + q_6 + m_3 + n_4) \times (q_6 + p_4 + m_3 + n_4)$
      block
      \begin{equation*}
        \begin{bmatrix}
          I & 0 & 0 & 0 \\
          0 & I & 0 & 0 \\
          0 & 0 & I & 0 \\
          0 & 0 & 0 & I
        \end{bmatrix}
        {} - \lambda
        \begin{bmatrix}
          0 & 0 & 0 & 0 \\
          0 & 0 & 0 & 0 \\
          0 & 0 & 0 & 0 \\
          0 & 0 & 0 & 0
        \end{bmatrix}
        = N_1 \otimes I,
      \end{equation*}
      where $p_4 = q_6$.

    \item The $(p_2 + m_2 + q_4) \times (q_4 + m_2 + p_2)$ block
      \begin{equation*}
        \begin{bmatrix}
          I & 0 & 0 \\
          0 & I & 0 \\
          0 & 0 & I
        \end{bmatrix}
        {} - \lambda
        \begin{bmatrix}
          0 & I & 0 \\
          0 & 0 & I \\
          0 & 0 & 0
        \end{bmatrix}
        = N_3 \otimes I,
      \end{equation*}
      where $m_2 = p_2 = q_4$.

    \item The $(q_5 + n_3 + p_3) \times (p_3 + n_3 + q_5)$ block
      \begin{equation*}
        \begin{bmatrix}
          I & 0 & 0 \\
          0 & I & 0 \\
          0 & 0 & I
        \end{bmatrix}
        {} - \lambda
        \begin{bmatrix}
          0 & I & 0 \\
          0 & 0 & I \\
          0 & 0 & 0
        \end{bmatrix}
        = N_3 \otimes I,
      \end{equation*}
      where $n_3 = p_3 = q_5$.

    \item The $(m_4 + p_5 + n_1 + q_2) \times (p_5 + m_4 + q_2 + n_1)$
      block
      \begin{equation*}
        \begin{bmatrix}
          0 & I & 0 & 0 \\
          0 & 0 & 0 & 0 \\
          0 & 0 & 0 & I \\
          0 & 0 & 0 & 0
        \end{bmatrix}
        {} - \lambda
        \begin{bmatrix}
          I & 0 & 0 & 0 \\
          0 & I & 0 & 0 \\
          0 & 0 & I & 0 \\
          0 & 0 & 0 & I
        \end{bmatrix}
        =
        \begin{bmatrix}
          J_2(0) \otimes I \\
          & J_2(0) \otimes I
        \end{bmatrix}
      \end{equation*}
      where $m_4 = p_5$ and $n_1 = q_2$, which yields $p_5 + q_2$ blocks
      $J_2(0)$ in the KCF of \eqref{eq:sqfreeRSVD} after suitable
      permutations.

    \item
      The $(p_1 + q_3 + m_1 + n_2) \times (p_1 + q_3 + m_1 + n_2)$ block
      \begin{equation*}
        \begin{bmatrix}
          0 & D_\alpha & 0 & 0 \\
          D_\alpha & 0 & 0 & 0 \\
          0 & 0 & I & 0 \\
          0 & 0 & 0 & I
        \end{bmatrix}
        {} - \lambda
        \begin{bmatrix}
          0 & 0 & D_\beta & 0 \\
          0 & 0 & 0 & D_\gamma \\
          D_\beta & 0 & 0 & 0 \\
          0 & D_\gamma & 0 & 0
        \end{bmatrix},
      \end{equation*}
      where $m_1 = n_2 = p_1 = q_3$, which reduces to a diagonal matrix
      with the Jordan ``blocks'' $J_1(\pm\sqrt{\pm\sigma_j})$ after
      suitable permutations and applying Lemma~\ref{thm:rsvdfour}.
  \end{enumerate}
\end{proof}



\begin{corollary}\label{thm:rsvdvecs}
  The eigenvectors belonging to the nonzero finite eigenvalues
  $\sqrt{\sigma_j}$, $-\sqrt{\sigma_j}$, $i\sqrt{\sigma_j}$, and
  $-i\sqrt{\sigma_j}$ are
  \begin{equation*}
    \begin{bmatrix}
      \beta_j^{-1} x_j \\
      \gamma_j^{-1} y_j \\
      \sqrt{\sigma_j} u_j \\
      \sqrt{\sigma_j} v_j
    \end{bmatrix},
    \qquad
    \begin{bmatrix}
      -\beta_j^{-1} x_j \\
      -\gamma_j^{-1} y_j \\
      \sqrt{\sigma_j} u_j \\
      \sqrt{\sigma_j} v_j
    \end{bmatrix},
    \qquad
    \begin{bmatrix}
      -i\beta_j^{-1} x_j \\
      \phantom{-}i\gamma_j^{-1} y_j \\
      \phantom{-}\sqrt{\sigma_j} u_j \\
      -\sqrt{\sigma_j} v_j
    \end{bmatrix},
    \quad\text{and}\quad
    \begin{bmatrix}
      \phantom{-}i\beta_j^{-1} x_j \\
      -i\gamma_j^{-1} y_j \\
      \phantom{-}\sqrt{\sigma_j} u_j \\
      -\sqrt{\sigma_j} v_j
    \end{bmatrix},
  \end{equation*}
  respectively. Here $u_j = U e_j$, $v_j = V e_{n_1 + j}$, $x_j = X
  e_j$, and $y_j = Y e_{q_1 + q_2 + j}$.
\end{corollary}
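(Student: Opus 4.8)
The plan is to read the eigenvectors directly off the chain of reductions in the proof of Theorem~\ref{thm:sqfreeRSVD}, rather than to verify anything from scratch. Recall from that proof that $\mathcal A_2 - \lambda\mathcal B_2 = \mathcal Y_1^*\mathcal Y_0^*(\mathcal A - \lambda\mathcal B)\mathcal X_0\mathcal X_1$ with $\mathcal X_0 = \diag(X, Y, U, V)$ and $\mathcal X_1$ the permutation matrix of $\pi_{\mathcal X}$, and that after one further interleaving permutation together with the diagonal scaling of Lemma~\ref{thm:rsvdfour}, the last diagonal block of $\mathcal A_2 - \lambda\mathcal B_2$ splits into $p_1$ independent copies of the $4\times4$ pencil \eqref{eq:svdfour}, the $j$th with $\sigma = \sigma_j$. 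Since a right eigenvector of $\mathcal A - \lambda\mathcal B$ for $\mu$ is obtained from a right eigenvector of an equivalent pencil for $\mu$ by applying only the column transformations, it suffices to take a right eigenvector of such a $4\times4$ copy, pad it with zeros, and carry it back through the interleaving permutation, $\mathcal X_1$, and $\mathcal X_0$.

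First I would read the four right eigenvectors of \eqref{eq:svdfour} with $\sigma = \sigma_j$ off Lemma~\ref{thm:svdfour}: up to scaling they are the columns of the unitary $\mathcal X$ there, namely $(1,1,1,1)^T$, $(-1,-1,1,1)^T$, $(-i,i,1,-1)^T$, and $(i,-i,1,-1)^T$, belonging to $\sqrt{\sigma_j}$, $-\sqrt{\sigma_j}$, $i\sqrt{\sigma_j}$, and $-i\sqrt{\sigma_j}$ respectively. Undoing the scaling of Lemma~\ref{thm:rsvdfour} amounts to multiplying by $\sigma_j^{-1/4}\diag(\beta_j^{-1}, \gamma_j^{-1}, \sqrt{\sigma_j}, \sqrt{\sigma_j})$ and dropping the irrelevant scalar; this turns the four vectors into $(\beta_j^{-1}, \gamma_j^{-1}, \sqrt{\sigma_j}, \sqrt{\sigma_j})^T$, $(-\beta_j^{-1}, -\gamma_j^{-1}, \sqrt{\sigma_j}, \sqrt{\sigma_j})^T$, $(-i\beta_j^{-1}, i\gamma_j^{-1}, \sqrt{\sigma_j}, -\sqrt{\sigma_j})^T$, and $(i\beta_j^{-1}, -i\gamma_j^{-1}, \sqrt{\sigma_j}, -\sqrt{\sigma_j})^T$, which already agree entrywise with the four displayed vectors once we know which block each entry belongs to. The last step is that identification: the final group $(1,9,13,18)$ of $\pi_{\mathcal X}$ places the four entries in the $p_1$-, $q_3$-, $m_1$-, and $n_2$-column blocks, and the interleaving permutation selects index $j$ within each. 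Since $\mathcal X_0 = \diag(X, Y, U, V)$ and the column blocks are ordered $p_1,\dots,p_6,\,q_1,\dots,q_6,\,m_1,\dots,m_4,\,n_1,\dots,n_4$, the relevant canonical basis vectors are $e_j$ in the $X$-part, $e_{q_1+q_2+j}$ in the $Y$-part, $e_j$ in the $U$-part, and $e_{n_1+j}$ in the $V$-part; applying $\mathcal X_0$ turns these into $x_j = Xe_j$, $y_j = Ye_{q_1+q_2+j}$, $u_j = Ue_j$, and $v_j = Ve_{n_1+j}$, which is exactly the labelling in the statement, and reproduces the four displayed eigenvectors.

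The only real obstacle is keeping the index bookkeeping straight — correctly composing $\pi_{\mathcal X}$ with the interleaving permutation so that the offsets $q_1+q_2$ and $n_1$ emerge. As a cross-check, and as an alternative self-contained argument in the ``direct verification'' style of the preceding lemmas, one can substitute each claimed pair $(\mu, w)$ into $(\mathcal A - \mu\mathcal B)w = 0$ and use $A = X^{-*}\Sigma_\alpha Y^{-1}$, $B = X^{-*}\Sigma_\beta U^*$, $C = V\Sigma_\gamma Y^{-1}$ together with the block pattern of $\Sigma_\alpha$, $\Sigma_\beta$, $\Sigma_\gamma$ in \eqref{eq:sigmas}; the four block rows then collapse to $B^*x_j = \beta_j u_j$, $Cy_j = \gamma_j v_j$, $A^*x_j = \alpha_j Y^{-*}e_{q_1+q_2+j}$, $Ay_j = \alpha_j X^{-*}e_j$, and $\alpha_j = \sigma_j\beta_j\gamma_j$, all immediate from the decomposition and the scaling $\alpha_j^2 + \beta_j^2\gamma_j^2 = 1$.
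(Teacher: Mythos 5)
Your proposal is correct and follows essentially the route the paper intends for this corollary: the paper gives no separate proof, leaving the eigenvectors to be read off by composing the column transformations $\mathcal X_0$, $\mathcal X_1$, and the scaling of Lemma~\ref{thm:rsvdfour} with the eigenvector matrix of Lemma~\ref{thm:svdfour}, exactly as you do, and your index bookkeeping ($e_j$, $e_{q_1+q_2+j}$, $e_j$, $e_{n_1+j}$ in the $X$-, $Y$-, $U$-, $V$-blocks) is right. Your closing direct-verification sketch is also sound (though only $\sigma_j=\alpha_j/(\beta_j\gamma_j)$ is needed, not the normalization $\alpha_j^2+\beta_j^2\gamma_j^2=1$), so no gaps remain.
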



A comparison with the KCF of the typical augmented pencil
\eqref{eq:augRSVD} might be insightful. Using the transform $\diag(U,
Y)$ and noting that we get a $12 \times 12$ block matrix with sizes
\begin{equation*}
  p + q = p_1 + \dots + p_6 + q_1 + \dots + q_6,
\end{equation*}
we can use the permutations
\begin{equation*}
  \begin{split}
    \pi_{\mathcal X} &= (
      6, 7, \quad
      12, 4, \quad
      10, 2, \quad
      3, 11, \quad
      5, 8, \quad
      1, 9
    ),
    \\
    \pi_{\mathcal Y} &= (
      6, 7, \quad
      4, 12, \quad
      2, 10, \quad
      11, 3, \quad
      5, 8, \quad
      1, 9
    ).
  \end{split}
\end{equation*}
to see that we get the following blocks for the KCF of
\eqref{eq:augRSVD}.
\begin{enumerate}
  \item A $(p_6 + q_1) \times (p_6 + q_1)$ zero block.
  \item A $(p_4 + q_6) \times (q_6 + p_4)$ block $N_1 \otimes I$.
  \item A $(p_2 + q_4) \times (q_4 + p_2)$ block $N_2 \otimes I$.
  \item A $(p_3 + q_5) \times (q_5 + p_3)$ block $N_2 \otimes I$.
  \item A $(p_5 + q_2) \times (p_5 + q_2)$ block $J_1(0) \otimes I$.
  \item A $(p_1 + q_3) \times (p_1 + q_3)$ block
    \begin{equation*}
      \begin{bmatrix}
        0 & D_\alpha \\
        D_\alpha & 0
      \end{bmatrix}
      {} - \lambda
      \begin{bmatrix}
        D_\beta^2 & 0 \\
        0 & D_\gamma^2
      \end{bmatrix}.
    \end{equation*}
\end{enumerate}
Hence, we see that the KCF of \eqref{eq:augRSVD} and
\eqref{eq:sqfreeRSVD} have comparable blocks.  The primary qualitative
difference is that the zero singular values do not result in Jordan
blocks of size larger than 1.



\section{A tree of generalized singular value decompositions}

For the OSVD we started with the matrix pencil \eqref{eq:sqfreeSVD},
where $\mathcal A$ and $\mathcal B$ together have a total of eight
nonzero blocks. Two of these blocks are the considered matrix $A$ and
its Hermitian transpose $A^*$, the remaining six blocks are identity
matrices.  Then, for the QSVD we replaced two of these identity matrices
with the considered matrix $C$ and its Hermitian transpose $C^*$.  Next,
for the RSVD we replaced two more identity matrices with the considered
$B$ and its Hermitian transpose $B^*$. We now have two identity matrices
left, and a natural question to ask is: if we replace these two
identities with two other matrices, can we meaningfully interpret the
resulting pencil as a generalized SVD?  The answer to this question is
yes if we take
\begin{equation}\label{eq:qqqqsvd}
  \mathcal A - \lambda \mathcal B =
  \begin{bmatrix}
    0 & A & 0 & 0 \\
    A^* & 0 & 0 & 0 \\
    0 & 0 & D^*D & 0 \\
    0 & 0 & 0 & EE^*
  \end{bmatrix}
  {} - \lambda
  \begin{bmatrix}
    0 & 0 & B & 0 \\
    0 & 0 & 0 & C^* \\
    B^* & 0 & 0 & 0 \\
    0 & C & 0 & 0
  \end{bmatrix},
\end{equation}
where $D \in \mathbb{C}^{k \times m}$ and $E \in \mathbb{C}^{n \times
l}$ for some $k,l \ge 1$. To see this, suppose for simplicity that $D$
and $E$ are both invertible, then the pencil above corresponds to the
RSVD of the triplet $(A, BD^{-1}, E^{-1}C)$. In turn, the restricted
singular values of the latter triplet coincide with the ordinary
singular values of the product $DB^{-1}\!AC^{-1}E$ if $B$ and $C$ are
also invertible.  The usefulness of this \textit{QQQQ-SVD} is unclear,
and the pencil \eqref{eq:qqqqsvd}, which is no longer cross
product--free, is primarily of theoretical interest. For the full
definition of this decomposition, see De~Moor and Zha~\cite{MZ91} for
more details.



\section{Numerical experiments}\label{sec:ne}

The general idea for the experiments in this section is straightforward.
Simply generate matrices with sufficiently large condition numbers, form
the pencils \eqref{eq:augQSVD}, \eqref{eq:sqfreeQSVD},
\eqref{eq:augRSVD}, and \eqref{eq:sqfreeRSVD}, and check the accuracy of
the computed generalized eigenvalues. The following table summarizes the
relation between the various singular value problems, their
reformulations as matrix pencils, and the corresponding eigenvalues.
\begin{center}
  \begin{tabular}{@{}rccc@{}}
    \toprule
    $\lambda$ & OSVD & QSVD & RSVD \\
    \midrule
    $\sigma^2$ & \eqref{eq:sqSVD} & \eqref{eq:sqQSVD} & --- \\
    $\pm\sigma$ & \eqref{eq:augSVD} & \eqref{eq:augQSVD} & \eqref{eq:augRSVD} \\
    $\pm \sqrt{\pm\sigma}$ & \eqref{eq:sqfreeSVD} & \eqref{eq:sqfreeQSVD} & \eqref{eq:sqfreeRSVD}\\
    \bottomrule
  \end{tabular}
\end{center}

We can generate the matrices in different ways, but the straightforward
approaches described next suffice to prove the usefulness of the new
pencils.  We proceed as follows for the QSVD; given the dimension $n$
and sufficiently large condition numbers $\kappa_Y$ and $\kappa_\Sigma$:
\begin{enumerate}
  \item Generate random orthonormal matrices $U_Y$, $V_Y$, $U$ and $V$
    \cite{Mezz07}.
  \item Compute $\Sigma_Y = \diag(\eta_1, \dots, \eta_n)$ and $Y = U_Y
    \Sigma_Y V_Y^T$, where $\eta_j = \kappa_Y^{1/2 - (j - 1) / (n -
    1)}$.
  \item Let $\Sigma = \diag(\sigma_1, \dots, \sigma_n)$, $\Sigma_\alpha
    = \diag(\alpha_1, \dots, \alpha_n)$, and $\Sigma_\gamma =
    \diag(\gamma_1, \dots, \gamma_n)$, where $\sigma_j
    = \kappa_\Sigma^{1/2 - (j - 1) / (n - 1)}$, $\alpha_j = \sigma_j
    (1 + \sigma_j^2)^{-1/2}$, and $\gamma_j = (1 + \sigma_j^2)^{-1/2}$.
  \item Compute $A = U \Sigma_\alpha Y^{-1}$ and $C = V \Sigma_\gamma
    Y^{-1}$.
\end{enumerate}
With the steps above we generate matrices satisfying $\kappa(Y) =
\kappa_Y$, $\Sigma = \Sigma_\alpha \Sigma_\gamma^{-1}$, $\Sigma_\alpha^2
+ \Sigma_\gamma^2 = I$, $\kappa(\Sigma) = \kappa_\Sigma$, and
$\kappa(\Sigma_\alpha) = \kappa(\Sigma_\gamma) = \kappa_\Sigma^{1/2}$.
To ensure we do not lose precision prematurely, we must do all the
computations in higher-precision arithmetic\footnote{The required random
numbers are generated in double precision for simplicity.} and convert
$A$ and $C$ to IEEE 754 double precision at the end.  Given the
dimension $n$ and the condition numbers $\kappa_X$, $\kappa_Y$, and
$\kappa_\Sigma$, we can take a similar approach for the RSVD.
\begin{enumerate}
  \item Generate $U$, $V$, $Y$, $\Sigma$, $\Sigma_\alpha$, and
    $\Sigma_\gamma$ as before, and let $\Sigma_\beta = I$.
  \item Generate $X$ in the same way as $Y$, but with the condition
    number $\kappa_X$ instead of $\kappa_Y$.
  \item Let $A = X^{-T} \Sigma_\alpha Y^{-1}$, $B = X^{-T}
    \Sigma_\beta U^T$, and $C = V \Sigma_\gamma Y^{-1}$.
\end{enumerate}
Again, we have to do all the computations in higher-precision arithmetic
and convert $A$, $B$, and $C$ to double precision at the end.

Since the generated $B$ and $C$ have full row and column ranks,
respectively, the pencils \eqref{eq:augQSVD} and \eqref{eq:augRSVD} are
Hermitian positive definite generalized eigenvalue problems. This means
that we can use specialized solvers to compute the generalized
eigenvalues and vectors. The same is not true for \eqref{eq:sqfreeQSVD}
and \eqref{eq:sqfreeRSVD}, where both $\mathcal A$ and $\mathcal B$ are
indefinite. Thus, we have to use generic eigensolvers that do not take
the special structure of the pencils into account. And as a result, the
computed eigenvalues may not be purely real or purely imaginary.
Furthermore, suppose that
\begin{equation*}
  \widetilde \lambda_1 \approx \sqrt{\sigma},
  \qquad
  \widetilde \lambda_2 \approx i\sqrt{\sigma},
  \qquad
  \widetilde \lambda_3 \approx -\sqrt{\sigma},
  \quad\text{and}\quad
  \widetilde \lambda_4 \approx -i\sqrt{\sigma},
\end{equation*}
then we have no guarantee that the magnitudes $|\lambda_j|$ match to
high accuracy. The latter problem leads to the question: which
$|\lambda_j|$ should we pick? Anecdotal observations suggest that the
squared (absolute) geometric mean
\begin{equation}\label{eq:mean}
  (|\lambda_1|\, |\lambda_2|\, |\lambda_3|\, |\lambda_4|)^{-1/2}
  \approx \sigma
\end{equation}
is a reasonable choice.

We choose one small example to illustrate the accuracy lost from working
with cross products, and also the approximation picking problem. Suppose
that $n = 4$, $\kappa_Y = 10^{7}$, and $\kappa_\Sigma = 10$; then we get
the following results for a randomly generated matrix pair. First the
exact quotient singular values rounded to 12 digits after the point:
{
\begin{align*}
  3&.162277660168 &
  1&.467799267622 &
  0&.681292069058 &
  0&.316227766017.
\intertext{\normalsize Taking the square roots of the generalized
  eigenvalues of the pencil \eqref{eq:sqQSVD} gives:}
  3&.\underline{1}86032382196 &
  1&.\underline{467}953130046 &
  0&.\underline{681}383277148 &
  0&.\underline{316227}883689.
\intertext{\normalsize The magnitudes of the generalized eigenvalues of
  the augmented pencil \eqref{eq:augQSVD} are:}
  3&.\underline{1}86055633628 &
  1&.\underline{467}953295365 &
  0&.\underline{681}384866293 &
  0&.\underline{3162278}14411 \\
  3&.\underline{1}86055633628 &
  1&.\underline{467}953295365 &
  0&.\underline{681}384866293 &
  0&.\underline{3162278}14411.
\intertext{\normalsize The cross product--free pencil gives the squared
  magnitudes:}
  3&.\underline{162277}324135 &
  1&.\underline{46779926}3849 &
  0&.\underline{68129206}6129 &
  0&.\underline{3162277660}10 \\
  3&.\underline{16227766}1974 &
  1&.\underline{46779926761}8 &
  0&.\underline{681292069}112 &
  0&.\underline{31622776602}0 \\
  3&.\underline{16227766}1974 &
  1&.\underline{46779926761}8 &
  0&.\underline{681292069}112 &
  0&.\underline{31622776602}0 \\
  3&.\underline{162278}000456 &
  1&.\underline{46779927}1389 &
  0&.\underline{68129207}2105 &
  0&.\underline{3162277660}30.
\intertext{\normalsize Taking the squared absolute geometric means
  \eqref{eq:mean} yields:}
  3&.\underline{16227766}2135 &
  1&.\underline{46779926761}9 &
  0&.\underline{681292069}115 &
  0&.\underline{31622776602}0.
\intertext{\normalsize For comparison, the generalized singular values
  computed with LAPACK, which uses a Jacobi-type iteration (see, e.g.,
  Bai and Demmel~\cite{BD93} and the routine \texttt{xTGSJA}) are:}
  3&.\underline{162277659}936 &
  1&.\underline{467799267}555 &
  0&.\underline{6812920690}45 &
  0&.\underline{3162277660}24.
\end{align*}
}
The underlined digits of the approximations serve as a visual indicator
of their accuracy; if $d$ decimals places are underlined, then $d$ is
the largest integer for which the absolute error is less than $5 \cdot
10^{-(d+1)}$. The loss of accuracy caused by the cross products in the
pencils \eqref{eq:sqQSVD} and \eqref{eq:augQSVD} is obvious in the
results. We can also see that the digits of pairs of eigenvalues of the
typical augmented pencil \eqref{eq:augQSVD} match, while the final two
to four digits of the eigenvalues from the new cross product--free
pencil \eqref{eq:sqfreeQSVD} differ.  Still, the results of the latter
are about as accurate as the singular values computed by LAPACK
\cite{laug}.  Hence, users should prefer LAPACK for computing the QSVD
of dense matrix pairs, but may consider using \eqref{eq:sqfreeQSVD} in
combination with existing solvers for sparse matrix pairs.


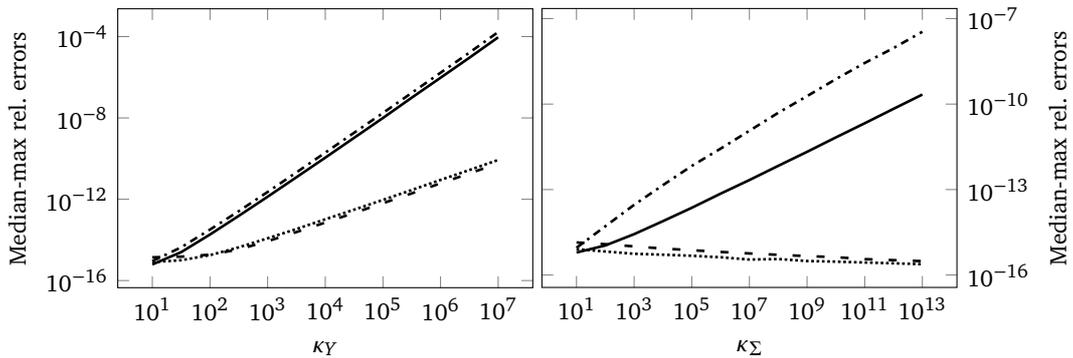
\begin{figure}[!htbp]
  \centering
  \begin{tikzpicture}
    \begin{loglogaxis}[
          height=0.33\textwidth
        , width=0.44\textwidth
        , xlabel={$\kappa_Y$}
        , ylabel={Median-max rel.\ errors}
        , xtick = {1e+1, 1e+2, 1e+3, 1e+4, 1e+5, 1e+6, 1e+7}
        , xminorticks = false
        ]

        \addplot+ table [x expr=10^\thisrow{k},y=aug] {\qsvdresultsfirst};
        \addplot+ table [x expr=10^\thisrow{k},y=new] {\qsvdresultsfirst};
        \addplot+ table [x expr=10^\thisrow{k},y=gsvd] {\qsvdresultsfirst};
        \addplot+ table [x expr=10^\thisrow{k},y=sq] {\qsvdresultsfirst};
    \end{loglogaxis}
  \end{tikzpicture}
  \begin{tikzpicture}
    \begin{loglogaxis}[
          height=0.33\textwidth
        , width=0.44\textwidth
        , xlabel={$\kappa_\Sigma$}
        , ylabel={Median-max rel.\ errors}
        , yticklabel pos=right
        , xtick = {1e+1, 1e+3, 1e+5, 1e+7, 1e+9, 1e+11, 1e+13}
        ]

        \addplot+ table [x expr=10^\thisrow{k},y=aug] {\qsvdresultssecond};
        \addplot+ table [x expr=10^\thisrow{k},y=new] {\qsvdresultssecond};
        \addplot+ table [x expr=10^\thisrow{k},y=gsvd] {\qsvdresultssecond};
        \addplot+ table [x expr=10^\thisrow{k},y=sq] {\qsvdresultssecond};
    \end{loglogaxis}
  \end{tikzpicture}
  \caption{The median of the maximum errors in the computed quotient
    singular values. On the left $\kappa_\Sigma = \kappa_X = 10$ and
    $\kappa_Y$ varies, and on the right $\kappa_X = \kappa_Y = 10$ and
    $\kappa_\Sigma$ varies. The dash-dotted line corresponds to the
    quadratic pencil \eqref{eq:sqQSVD}, the solid line to the typical
    augmented pencil \eqref{eq:augQSVD}, the dashed line to the cross
    product--free pencil \eqref{eq:sqfreeQSVD}, and the dotted line to
    the LAPACK results.}\label{fig:qsvd}
\end{figure}


For a more quantitative analysis, we can generate a large number of
matrix pairs and triplets for each combination of $n$, $\kappa_X$,
$\kappa_Y$, and $\kappa_\Sigma$. For each sample we compute the singular
values using the different methods we have available, and use the
(squared) absolute geometric mean, when necessary, to average the
approximations of each singular value.  Then we compute the
approximation errors and pick the maximum error for each matrix pair or
triplet. And finally, we take the median of all the maximum errors. As a
measure for the error we can use the chordal metric
\begin{equation*}
  \chi(\sigma, \widetilde \sigma)
  = \frac{|\sigma - \widetilde \sigma|}{%
      \sqrt{1 + \sigma^2}\; \sqrt{1 + \widetilde \sigma^2}}
  = \frac{|\sigma^{-1} - \widetilde \sigma^{-1}|}{%
      \sqrt{1 + \sigma^{-2}}\; \sqrt{1 + \widetilde \sigma^{-2}} }.
\end{equation*}
Here $\sigma$ is an exact (computed with high-precision arithmetic)
singular value, and $\widetilde \sigma$ is a computed approximation.


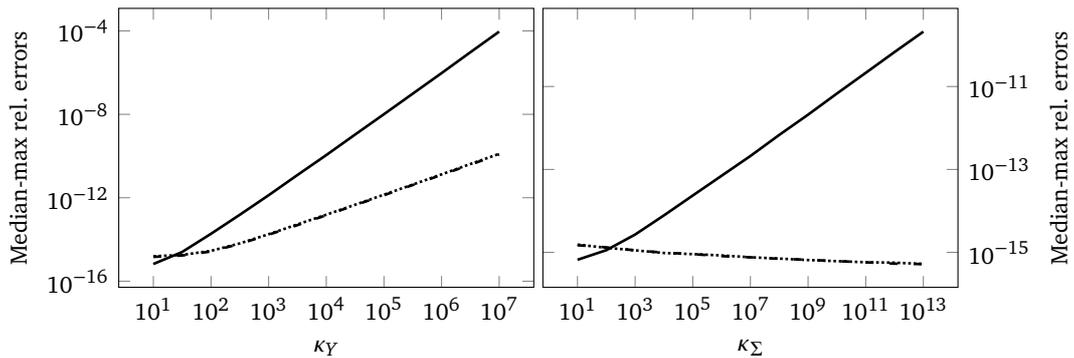
\begin{figure}[!htbp]
  \centering
  \begin{tikzpicture}
    \begin{loglogaxis}[
          height=0.33\textwidth
        , width=0.44\textwidth
        , xlabel={$\kappa_Y$}
        , ylabel={Median-max rel.\ errors}
        , xtick = {1e+1, 1e+2, 1e+3, 1e+4, 1e+5, 1e+6, 1e+7}
        , xminorticks = false
        ]

        \addplot+ table [x expr=10^\thisrow{k},y=aug] {\rsvdresultsfirst};
        \addplot+ table [x expr=10^\thisrow{k},y=new] {\rsvdresultsfirst};
        \addplot+ table [x expr=10^\thisrow{k},y=rsvd] {\rsvdresultsfirst};
    \end{loglogaxis}
  \end{tikzpicture}
  \begin{tikzpicture}
    \begin{loglogaxis}[
          height=0.33\textwidth
        , width=0.44\textwidth
        , xlabel={$\kappa_\Sigma$}
        , ylabel={Median-max rel.\ errors}
        , yticklabel pos=right
        , xtick = {1e+1, 1e+3, 1e+5, 1e+7, 1e+9, 1e+11, 1e+13}
        ]

        \addplot+ table [x expr=10^\thisrow{k},y=aug] {\rsvdresultssecond};
        \addplot+ table [x expr=10^\thisrow{k},y=new] {\rsvdresultssecond};
        \addplot+ table [x expr=10^\thisrow{k},y=rsvd] {\rsvdresultssecond};
    \end{loglogaxis}
  \end{tikzpicture}
  \caption{The median of the maximum errors in the computed restricted
    singular values. On the left $\kappa_X = \kappa_\Sigma = 10$ and
    $\kappa_Y$ varies, on the right $\kappa_\Sigma = 10$ and $\kappa_X =
    \kappa_Y$ varies; $n = 10$ in both cases. The solid line corresponds
    to the typical augmented pencil \eqref{eq:augRSVD}, the dashed line
    to the cross product--free pencil \eqref{eq:sqfreeRSVD}, and the
    dotted line (on top of the dashed line) to the results from the RSVD
    algorithm described in \cite{Zwaa19}.}\label{fig:rsvd}
\end{figure}


Figure~\ref{fig:qsvd} shows the results for the QSVD with 10000 samples
for each combination of parameters. The figure shows that we lose about
twice as much accuracy with \eqref{eq:sqQSVD} and \eqref{eq:augQSVD}
than with the new cross product--free pencil.  Furthermore, we see that
the quotient singular values computed from the new pencil are almost as
accurate as the ones computed with LAPACK. When we increase
$\kappa_\Sigma$ while keeping $\kappa_Y$ modest, we see that the squared
and augmented pencils lost accuracy as $\kappa_\Sigma$ increases, while
the results from the new pencil and LAPACK remain small.

Likewise, Figure~\ref{fig:rsvd} shows the results for the RSVD. Again we
see that we lose accuracy about twice as fast when using cross products
if $\kappa_Y$ increases and $\kappa_X$ and $\kappa_\Sigma$ remain
modest. Furthermore, we also again see that we lose accuracy with cross
products when $\kappa_X$ and $\kappa_Y$ remain modest and
$\kappa_\Sigma$ increases, while the cross product--free approaches keep
producing accurate results.



\section{Conclusion}\label{sec:con}

We have seen how we can reformulate the quotient and restricted singular
value problems as generalized eigenvalue problems, and without using
cross products like $A^*\!A$, $BB^*$, or $C^*C$. Moreover, the numerical
examples show the benefits of working with the cross product--free
pencils instead of the typical augmented pencils. That is, singular
values computed from the former may be more accurate than singular
values computed form the latter when $B$ or $C$ are ill-conditioned.

Still, when we use the cross product--free pencils we have to contend
with downsides that we have ignored so far. For example, for the QSVD
and square $A$ and $C$, the dimension of the cross product--free pencil
\eqref{eq:sqfreeQSVD} is twice as large as the dimension of the typical
augmented pencil \eqref{eq:augQSVD}, and four times as large as the
dimension of the squared pencil \eqref{eq:sqQSVD}. This makes a large
computational difference for dense solvers with cubic complexity.
Another problem is that we can no longer use specialized solvers for
Hermitian-positive-definite problems, even when $B$ is of full row rank
and $C$ of full column rank. We also have to contend with extra Jordan
blocks, for example for $\sigma = 0$, and the difficulties that
numerical software has when dealing with Jordan blocks.

But the above downsides of the cross product--free pencils also provide
opportunities for future research. For example, can we reduce the
performance overhead by developing solvers (e.g., for large-and-sparse
matrices) that exploit the block structure of the pencils? Can we
incorporate our knowledge of the structure of the spectrum to improve
the computed results?



\section*{Acknowledgements}

I wish to thank Andreas Frommer for his comments and suggestions.


\bibliographystyle{siamplain}
\bibliography{squarefree}

\end{document}

We choose one small example to illustrate the accuracy lost from working
with cross products, and also the approximation picking problem. Suppose
that $n = 4$, $\kappa_Y = 10^{7}$, and $\kappa_\Sigma = 10$; then we get
the following results for a randomly generated matrix pair. First the
exact quotient singular values rounded to 12 digits after the point:
{
\begin{align*}
  3&.162277660168 &
  1&.467799267622 &
  0&.681292069058 &
  0&.316227766019.
\intertext{\normalsize Taking the square roots of the generalized
  eigenvalues of the pencil \eqref{eq:sqQSVD} gives:}
  \underline{3}&.\underline{1}86032382196 &
  \underline{1}&.\underline{467}953130046 &
              0&.\underline{681}383277148 &
              0&.\underline{316227}883689.
\intertext{\normalsize The magnitudes of the generalized eigenvalues of
  the augmented pencil \eqref{eq:augQSVD} are:}
  \underline{3}&.\underline{1}86055633628 &
  \underline{1}&.\underline{467}953295365 &
              0&.\underline{681}384866293 &
              0&.\underline{316227}814411 \\
  \underline{3}&.\underline{1}86055633628 &
  \underline{1}&.\underline{467}953295365 &
              0&.\underline{681}384866293 &
              0&.\underline{316227}814411.
\intertext{\normalsize The cross product--free pencil gives the squared
  magnitudes:}
  \underline{3}&.\underline{162277}324135 &
  \underline{1}&.\underline{46779926}3849 &
              0&.\underline{68129206}6129 &
              0&.\underline{3162277660}10  \\
  \underline{3}&.\underline{16227766}1974 &
  \underline{1}&.\underline{4677992676}18 &
              0&.\underline{681292069}112 &
              0&.\underline{3162277660}20 \\
  \underline{3}&.\underline{16227766}1974 &
  \underline{1}&.\underline{4677992676}18 &
              0&.\underline{681292069}112 &
              0&.\underline{3162277660}20 \\
  \underline{3}&.\underline{16227}8000456 &
  \underline{1}&.\underline{4677992}71389 &
              0&.\underline{6812920}72105 &
              0&.\underline{3162277660}30.
\intertext{\normalsize Taking the squared absolute geometric means
  \eqref{eq:mean} yields:}
  \underline{3}&.\underline{16227766}2135 &
  \underline{1}&.\underline{4677992676}19 &
              0&.\underline{681292069}115 &
              0&.\underline{3162277660}20.
\intertext{\normalsize For comparison, the generalized singular values
  computed with LAPACK, which uses a Jacobi-type iteration (see, e.g.,
  Bai and Demmel~\cite{BD93} and the routine \texttt{xTGSJA}) are:}
  \underline{3}&.\underline{1622776}59936 &
  \underline{1}&.\underline{467799267}555 &
              0&.\underline{6812920690}45 &
              0&.\underline{3162277660}24.
\end{align*}
}
The underlined digits of the approximations show which significant
digits match with the exact singular values. The loss of accuracy caused
by the cross products in the pencils \eqref{eq:sqQSVD} and
\eqref{eq:augQSVD} is obvious in the results. We can also see that the
digits of pairs of eigenvalues of the typical augmented pencil
\eqref{eq:augQSVD} match, while the final two to four digits of the
eigenvalues from the new cross product--free pencil
\eqref{eq:sqfreeQSVD} differ.  Still, the results of the latter are
about as accurate as the singular values computed by LAPACK \cite{laug}.
Hence, users should prefer LAPACK for computing the QSVD of dense matrix
pairs, but may consider using \eqref{eq:sqfreeQSVD} in combination with
existing solvers for sparse matrix pairs.